\numberwithin{equation}{section}
\newtheorem{theorem}{Theorem}[section]
\newtheorem{lemma}[theorem]{Lemma}
\newtheorem{corollary}[theorem]{Corollary}
\newtheorem{remark}[theorem]{Remark}
\newtheorem{proposition}[theorem]{Proposition}
\newtheorem{definition}[theorem]{Definition}
\newcommand{\dd}{\,\mathrm{d}}
\renewcommand{\d}{\mathrm{d}}
\newcommand{\D}{\mathrm{D}}
\newcommand{\X}{{\bf X}}
\newcommand{\cadlag}{c{\`a}dl{\`a}g{ }}
\renewcommand{\epsilon}{\varepsilon}
\newcommand{\R}{\mathbb{R}}
\newcommand{\1}{\mathbf{1}}
\newcommand{\bbX}{\mathbb{X}}
\newcommand{\tA}{\tilde{A}}
\newcommand{\tY}{\tilde{Y}}
\newcommand{\tX}{\tilde{X}}
\newcommand{\tbX}{\tilde{\mathbf{X}}}
\newcommand{\tbbX}{\tilde{\mathbb{X}}}
\newcommand{\tL}{\tilde{L}}
\newcommand{\tK}{\tilde{K}}
\newcommand{\tZ}{\tilde{Z}}
\newcommand{\cB}{\mathcal{B}}
\newcommand{\cL}{\mathcal{L}}
\newcommand{\cM}{\mathcal{M}}
\newcommand{\cP}{\mathcal{P}}
\newcommand{\cS}{\mathcal{S}}
\newcommand{\bX}{\mathbf{X}}
\newcommand{\ver}[1]{{\vert\kern-0.25ex\vert\kern-0.25ex\vert #1 \vert\kern-0.25ex\vert\kern-0.25ex\vert}}
\newcommand{\ty}{\tilde{y}}
\newcommand{\cV}{\mathcal{V}}
\title[C{\`a}dl{\`a}g rough differential equations with reflecting barriers]{C{\`a}dl{\`a}g rough differential equations\\ with reflecting barriers}
\author[Allan]{Andrew L. Allan}
\address{Andrew L. Allan, ETH Zurich, Switzerland}
\email{andrew.allan@math.ethz.ch}
\author[Liu]{Chong Liu}
\address{Chong Liu, University of Oxford, United Kingdom}
\email{chong.liu@maths.ox.ac.uk}
\author[Pr{\"o}mel]{David J. Pr{\"o}mel}
\address{David J. Pr{\"o}mel, University of Mannheim, Germany}
\email{proemel@uni-mannheim.de}
\date{\today}
\begin{document}

\begin{abstract}
  We investigate rough differential equations with a time-dependent reflecting lower barrier, where both the driving (rough) path and the barrier itself may have jumps. Assuming the driving signals allow for Young integration, we provide existence, uniqueness and stability results. When the driving signal is a c{\`a}dl{\`a}g $p$-rough path for $p \in [2,3)$, we establish existence to general reflected rough differential equations, as well as uniqueness in the one-dimensional case.
\end{abstract}

\maketitle

\noindent \textbf{Key words:} $p$-variation, rough path, rough differential equation, reflecting barrier, Skorokhod problem, Young integration.

\noindent \textbf{MSC 2020 Classification:} 60H10, 60L20. 


\section{Introduction}

Stochastic differential equations (SDEs) with reflecting barriers or boundary conditions have a long history in probability theory going back to Skorokhod~\cite{Skorohod1961}. Since the early works \cite{Skorohod1961,McKean1963,Watanabe1971,ElKaroui1975,Tanaka1979} regarding reflected diffusions in a half-space, there has been a considerable effort to deal with various generalizations, such as more intricate boundary conditions (see e.g. \cite{Lions1984,Saisho1987}) or more complex stochastic processes, like fractional Brownian motion (see e.g. \cite{Ferrante2013}) and general semimartingales (see e.g. \cite{Menaldi1985,Laukajtys2013,Falkowski2017}). Associated properties of these reflected diffusions have been widely studied as well, such as approximation results and support theorems, see e.g.~\cite{Laukajtys2003,Aida2013,Ren2016}. The theoretical study of reflected SDEs and of the closely related Skorokhod problem has been additionally motivated by their many applications, such as in queuing theory and statistical physics, see e.g.~\cite{Mandelbaum1995,Burdzy2002}.

A fresh perspective on stochastic differential equations was initiated by Lyons, providing a pathwise analysis of SDEs, first using Young integration~\cite{Lyons1994}, and then by introducing the theory of rough paths~\cite{Lyons1998}, which allows one to treat various random noises, such as fractional Brownian motion and continuous semimartingales. The rough path approach to stochastic differential equations has been celebrated for offering many advantages and new insights; for an overview see for instance the introductory textbook~\cite{Friz2014}. A first pathwise analysis of reflected SDEs was presented by~\cite{Ferrante2013,Falkowski2015} using Young integration, and by \cite{Besalu2014,Aida2015} using the more powerful theory of rough paths.

The aim of the present work is to provide a pathwise analysis of differential equations reflected at a c{\`a}dl{\`a}g time-dependent barrier $L \colon [0,T] \to \R^n$ of finite $p$-variation. More precisely, for a c{\`a}dl{\`a}g path $A \colon [0,T] \to \R^d$ of finite $q$-variation and a c{\`a}dl{\`a}g path $X \colon [0,T] \to \R^d$ of finite $p$-variation with $q \in [1,2)$ and $p \in [2,3)$ satisfying $1/q+1/p>1$, we study the differential equation
\begin{equation}\label{eq:Intro}
  \d Y_t = f_1(Y_t) \dd A_t + f_2(Y_t) \dd X_t + \d K_t,\quad t\in [0,T],
\end{equation}
where the solution $Y$ is reflected at the time-dependent barrier~$L$, that is, $Y_t^i \geq L_t^i$ for $i=1,\dots,n$, and the reflector term $K \colon [0,T] \to \R^n$ is a non-decreasing process fulfilling a standard minimality condition.

In the first part of the paper we suppose that the second vector field is trivial, i.e.~$f_2 = 1$. In this case classical Young integration~\cite{Young1936} is sufficient to define the remaining integral in~\eqref{eq:Intro}, and we can thus treat \eqref{eq:Intro} as a reflected Young differential equation. Under standard assumptions on the vector field, we show the existence and uniqueness of a solution to~\eqref{eq:Intro} using a Banach fixed point argument. Moreover, we prove that the solution map $(A,X) \mapsto (Y,K)$ is locally Lipschitz continuous with respect to both the $p$-variation distance and to the Skorokhod~$J_1$ $p$-variation distance. These results provide a comprehensive pathwise analysis of reflected Young differential equations. In particular, our results complement the existing literature (cf.~\cite{Ferrante2013,Aida2015,Falkowski2015}) in terms of the pathwise stability of the solution map, which constitutes one of the central advantages of a pathwise analysis of SDEs. For instance, pathwise stability results allow one to prove support and approximation results, as well as large deviation principles for stochastic differential equations, cf.~\cite{Friz2010}.

In the second part we consider general vector fields $f_1$ and $f_2$. In this case Young integration is no longer sufficient. We therefore assume that $X$ is a c{\`a}dl{\`a}g $p$-rough path in order to define the second integral in \eqref{eq:Intro} as a rough integral, turning \eqref{eq:Intro} into a reflected rough differential equation (reflected RDE). For this purpose we rely on the c{\`a}dl{\`a}g rough path theory of forward integration recently introduced in \cite{Friz2017,Friz2018}, a generalization of the now classical theory of continuous rough paths which also allows processes with jumps. Indeed, general semimartingales can be lifted to c{\`a}dl{\`a}g rough paths, as well as many other stochastic processes, such as suitable Gaussian processes, Dirichlet processes and Markov processes, see \cite{Friz2018,Chevyrev2019,Liu2018}. Hence, a c{\`a}dl{\`a}g rough path approach to \eqref{eq:Intro} significantly enlarges the class of well-posed reflected SDEs. As already pointed out in \cite{Aida2015} and \cite{Deya2019}, reflected rough differential equations face significant new challenges compared to the treatment of classical RDEs, the main reason being the lack of regularity of the Skorokhod map, particularly its lack of Lipschitz continuity of the space of controlled paths (see Section~\ref{sec:rough existence} below).

We establish the existence of a solution to the reflected RDE~\eqref{eq:Intro} based on Schauder's fixed point theorem and $p$-variation estimates for the Skorokhod map due to \cite{Falkowski2015}. While Schauder's fixed point theorem is a classical argument in the context of differential equations, the present setting allowing driving signals $A$ and $X$ with jumps requires careful analysis, particularly in the introduction of a suitable compact set on the space of c{\`a}dl{\`a}g controlled paths. So far existence results for reflected RDEs are only known for continuous driving signals, see \cite{Besalu2014,Aida2015,Deya2019,Richard2020}. Similar results have been obtained in the context of sweeping processes with pathwise perturbations~\cite{Castaing2017,Falkowski2015b} and path-dependent rough differential equations \cite{Aida2016,Ananova2020}, both also covering reflected RDEs.

We then prove the uniqueness of the solution to the reflected rough differential equation~\eqref{eq:Intro} in the one-dimensional case, that is, when the solution $Y$ is real-valued. For multidimensional reflected RDEs a general uniqueness result cannot hold, as observed by Gassiat~\cite{Gassiat2021}, who provides a linear RDE in $n=2$ dimensions reflected at $0$ with infinitely many solutions. For one-dimensional reflected RDEs driven by continuous rough paths uniqueness was obtained by \cite{Deya2019} in the case $L=0$ and by \cite{Richard2020} in the case of time-dependent barriers~$L$ . The approach of \cite{Deya2019} (as well as \cite{Richard2020}) relies fundamentally on the sewing lemma and the rough Gr{\"o}nwall inequality of \cite{Deya2019b}, for which the continuity of the driving paths seems to be crucial, see Remark~\ref{rem:Deya approach}. Therefore, in order to treat the c{\`a}dl{\`a}g setting, our proof of uniqueness utilizes a novel approach based on a contradiction argument. Remarkably, this proof is rather transparent and is surprisingly short, particularly in the special case of continuous driving paths.

\medskip

\noindent\textbf{Organization of the paper:} In Section~\ref{sec:young setting} we provide existence, uniqueness and stability results for reflected differential equations driven by signals allowing for Young integration. In Section~\ref{sec:rough existence} we prove the existence of solutions to multidimensional reflected rough differential equations. Finally, we provide a uniqueness result for one-dimensional reflected RDEs in Section~\ref{sec:rough uniqueness}.

\medskip

\noindent\textbf{Acknowledgment:} A.~L.~Allan gratefully acknowledges financial support by the Swiss National Science Foundation via Project 200021\textunderscore 184647. C.~Liu gratefully acknowledges support from the Early Postdoc.Mobility Fellowship (No.~P2EZP2\textunderscore 188068) of the Swiss National Science Foundation, and from the G.~H.~Hardy Junior Research Fellowship in Mathematics awarded by New College, Oxford.

\subsection{Basic notation}

Let us start by introducing some standard definitions and notation used throughout the paper. 

\medskip

A partition~$\mathcal{P}=\mathcal{P}([s,t])$ of the interval~$[s,t]$ is a set of essentially disjoint intervals covering $[s,t]$, i.e.~$\mathcal{P} = \{[u_i,u_{i+1}]\,:\, s = u_0 < u_1 < \cdots < u_n = t\}$. The mesh size of a partition $\mathcal{P}$ is given by $|\mathcal{P}| := \max\{|u_{i+1} - u_i|\,:\,i=0,\dots,n-1\}$. Given a metric space $(E,d)$, the set $D([0,T];E)$ denotes the space of all c{\`a}dl{\`a}g (right-continuous with left-limits) paths from $[0,T]$ into $E$. For $p \geq 1$, the $p$-variation of $X \in D([0,T];E)$ over the interval $[s,t]$ is defined by
\begin{equation*}
  \|X\|_{p,[s,t]} := \bigg(\sup_{\mathcal{P}\subset[s,t]} \sum_{[u,v]\in \mathcal{P}} d(X_u,X_v)^p \bigg)^{\frac{1}{p}},
\end{equation*}
where the supremum is taken over all finite partitions $\mathcal{P}$ of the interval $[s,t]$, and the sum denotes the summation over all intervals $[u,v] \in \mathcal{P}$. Recall that, for every $s\in [0,T]$, the function $[s,T]\ni t\mapsto \|X\|_{p,[s,t]}$ is non-decreasing and right-continuous with $\|X\|_{p,[s,s]}:=\lim_{t\downarrow s} \|X\|_{p,[s,t]}=0$ (see \cite[Lemma~7.1]{Friz2018}), and the function $(s,t)\mapsto  \|X\|_{p,[s,t]}^p$ is superadditive, i.e.~$\|X\|_{p,[s,u]}^p+\|X\|_{p,[u,t]}^p \leq \|X\|_{p,[s,t]}^p$ for $0\leq s\leq u\leq t\leq T$. See \cite[Section~2.2]{Chistyakov1998} for these and further properties of $p$-variation seminorms. Moreover, we set\footnote{One can similarly define $\|X\|_{p,(s,t]}$, but since all the paths we consider are c{\`a}dl{\`a}g, this always coincides with $\|X\|_{p,[s,t]}$.}
\begin{equation*}
  \|X\|_{p,[s,t)} := \sup_{u < t} \|X\|_{p,[s,u]}.
\end{equation*}
A path $X \in D([0,T];E)$ is said to have finite $p$-variation for some $p \in [1,\infty)$ if $\|X\|_{p,[0,T]} < \infty$. We will denote the space of all c{\`a}dl{\`a}g paths of finite $p$-variation by $D^{p}([0,T];E)$. 

The space $\R^n$ is equipped with the Euclidean norm $|\,\cdot\,|$. For two real numbers $x,y \in \R$ we set $x \wedge y := \min \{x,y\}$ and $x\vee y := \max\{x,y\}$, and we write the positive part of a vector $x=(x^1,\dots,x^n) \in \R^n$ as
\begin{equation}\label{eq:positive part}
  [x]^+ := ([x^1]^+,\dots, [x^n]^+) \qquad \text{where} \qquad [x^i]^+ := x^i \vee 0.
\end{equation}
For two paths $X = (X^1,\dots,X^n)\in D([0,T];\R^n)$ and $Y = (Y^1,\dots,Y^n) \in D([0,T];\R^n)$ we write $X \leq Y$ to mean that $X^i \leq Y^i$ for every $i = 1,\dots,n$.

\medskip

Whenever $X \in D([0,T];B)$ takes values in a Banach space $(B,\|\hspace{0.5pt}\cdot\hspace{0.5pt}\|)$, we will write $\|X\|_{\infty} := \sup_{t \in [0,T]} \|X_t\|$ for the supremum norm and we will use the abbreviations
\begin{equation*}
  X_{s,t} := X_t - X_s, \qquad X_{t-}:=\lim_{s \to t,\, s<t} X_s \qquad \text{and} \qquad \Delta X_t := X_{t-,t} = X_t - X_{t-}.
\end{equation*}

The space of linear maps from $\R^d \to \R^n$ is denoted by $\mathcal{L}(\R^d;\R^n)$ and we write $C_b^k = C_b^k(\R^n;\mathcal{L}(\R^d;\R^n))$ for the space of $k$-times differentiable (in the Fr{\'e}chet sense) functions $f \colon \R^n \to \mathcal{L}(\R^d;\R^n)$ such that $f$ and all its derivatives up to order~$k$ are continuous and bounded. We equip this space with the norm
\begin{equation*}
  \|f\|_{C^k_b} := \|f\|_\infty + \|\D f\|_\infty + \cdots + \|\D^kf\|_\infty,
\end{equation*}
where $\|\hspace{0.5pt}\cdot\hspace{0.5pt}\|_{\infty}$ denotes the supremum norm on the corresponding spaces of operators.

\medskip

Let $(B,\|\cdot\|)$ be a normed space and $f, g \colon B \to \R$ two functions. We shall write $f \lesssim g$ or $f \leq Cg$ to mean that there exists a constant $C > 0$ such that $f(x) \leq C g(x)$ for all $x \in B$. Note that the value of such constants may change from line to line, and that the constants may depend on the normed space, e.g.~through its dimension or regularity parameters. If we want to emphasize the dependence of the constant $C$ on some particular variables $\alpha_1,\dots,\alpha_n$, then we will write $C = C_{\alpha_1,\dots,\alpha_n}$.

\section{Reflected Young differential equations}\label{sec:young setting}

In this section we shall study reflected differential equations driven by paths $A \colon [0,T] \to \R^d$ with sufficiently regularity to allow for Young integration. More precisely, we assume that $A \in D^q([0,T];\R^d)$, $X \in D^p([0,T];\R^n)$ and $L \in D^p([0,T];\R^n)$ with $q \in [1,2)$ and $p \geq q$ such that $1/p + 1/q > 1$. Given $f \in C_b^2$ and $y \in \R^n$, we seek for two paths $Y \in D^p([0,T];\R^n)$ and $K \in D^1([0,T];\R^n)$ satisfying the \emph{reflected Young differential equation}
\begin{equation}\label{eq:reflected Young equation}
  Y_t = y + \int_0^t f(Y_s)\dd A_s+ X_t + K_t, \qquad t \in [0,T],
\end{equation}
such that, for every $i=1,\dots,n$,
\begin{enumerate}[(a)]
  \item $Y_t^i \geq L^i_t$ for all $t \in [0,T]$,
  \item $K^i \colon [0,T] \to \R$ is a non-decreasing function such that $K^i_0 = 0$, and
  \begin{equation}\label{eq:K is minimal}
    \int_0^t (Y^i_s-L^i_s)\dd K^i_s = 0, \qquad t \in [0,T],
  \end{equation}
  where the integral in \eqref{eq:K is minimal} is understood in the Lebesgue--Stieltjes sense.
\end{enumerate}
In the reflected Young differential equation~\eqref{eq:reflected Young equation} the integral $ \int_0^t f(Y_s)\dd A_s$ is well-defined as a Young integral, in the sense of~\cite{Young1936}; see also \cite[Section~2.2]{Friz2018}. In particular, we recall that, for $X \in D^p([0,T];\R^d)$ and $Y \in D^q([0,T];\cL(\R^d;\R^n))$, the Young integral
\begin{equation*}
  \int_s^t Y_r\dd X_r := \lim_{|\cP| \to 0} \sum_{[u,v] \in \cP} Y_u X_{u,v},
\end{equation*}
exists (in the classical mesh Riemann--Stieltjes sense) whenever $1/p + 1/q > 1$, and comes with the estimate
\begin{equation}\label{eq:Young integral estimate}
  \bigg|\int_s^t Y_r\dd X_r - Y_s X_{s,t}\bigg| \leq C_{p,q}\|Y\|_{q,[s,t)} \|X\|_{p,[s,t]},
\end{equation}
where the constant $C_{p,q}$ depends only on $p$ and $q$.

Let us remark that, in the presence of jumps, it is crucial to take left-point Riemann sums to define the Young integral since, for instance, mid- or right-point Riemann sums approximation lead in general to different limits. This is in contrast to Young integration for continuous paths. Moreover, we note that the Young integral itself $t \mapsto \int_0^t Y_r\dd X_r$ is a \cadlag path and its jump at time $t \in (0,T]$ is given by
\begin{equation*}
  \Delta\bigg(\int_0^\cdot Y_r\dd X_r \bigg)_t = Y_{t-}\Delta X_t.
\end{equation*}

\begin{remark}
  Despite our focus here on Young integration in the sense described above, it is actually necessary to instead define the integral in \eqref{eq:K is minimal} in the Lebesgue--Stieltjes sense. Suppose for instance that $A = 0$, $X = 0$, $L = 0$ and $y = 0$. Then, for any fixed $u \in (0,T]$, setting $Y_t = K_t = \1_{\{t \geq u\}}$ defines a solution $(Y,K)$ of \eqref{eq:reflected Young equation} such that \eqref{eq:K is minimal} holds in the Young (or equivalently Riemann--Stieltjes) sense, essentially because the left-endpoint always lies before the jump. Thus, Young integration does not correctly capture the minimality property for $K$ in the c{\`a}dl{\`a}g setting.
\end{remark}

The problem of proving existence and uniqueness results for reflected (stochastic) differential equations is known to be closely related to the so-called Skorokhod problem, as originally introduced by Skorokhod \cite{Skorohod1961}. Since our approach to the reflected Young differential equation~\eqref{eq:reflected Young equation} relies on the Skorokhod problem, we shall recall some properties of the Skorokhod problem in the next subsection and provide some basic estimates regarding $p$-variation semi-norms as groundwork for later purposes.

\subsection{Skorokhod problem and p-variation estimates}

Let $Y,L\in D([0,T];\R^n)$ be such that $Y_0 \geq L_0$. A solution to the Skorokhod problem associated with the path $Y$ and the lower barrier $L$, is a pair $(Z,K) \in D([0,T];\R^n) \times D([0,T];\R^n)$ such that
\begin{enumerate}[(a)]
  \item $Z_t = Y_t+K_t\geq L_t$ for $t\in [0,T]$,
  \item $K_0=0$ and $K=(K^1,\dots,K^n)$, where $K^i$ is non-decreasing function such that 
  \begin{equation*}
    \int_0^t(Z^i_s-L^i_s)\dd K^i_s=0, \qquad \text{for all} \quad t \in [0,T],
  \end{equation*}
  for every $i=1,\dots,n$, where the integral is understood in the Lebesgue--Stieltjes sense as before.
\end{enumerate}
It is well-known, that the Skorokhod problem has a unique solution~$(Z,K)$, see e.g.~\cite[Theorem~2.6 and Remark~2.7]{Burdzy2009}. Moreover, we introduce the associated Skorokhod map~$\cS$ by
\begin{equation*}
  \mathcal{S} \colon (Y,L) \to (Z,K)
\end{equation*}
where $(Z,K)$ is the solution to the Skorokhod problem given $(Y,L)$, and we set 
\begin{equation*}
  \mathcal{S}_1(Y,L) := Z\quad\text{and} \quad \mathcal{S}_2(Y,L) := K.
\end{equation*}
As the following result from \cite{Falkowski2015} shows, the Skorokhod map~$\mathcal{S}$ turns out to be a Lipschitz continuous map with respect to the $p$-variation distance.

\begin{theorem}[Theorem~2.2 in \cite{Falkowski2015}]\label{thm:Skorokhod Lipschitz est}
  Let $Y,L,\tilde{Y},\tilde{L}\in D([0,T];\R^n)$ and assume that $Y_0 \geq L_0$ and $\tilde{Y}_0 \geq \tilde{L}_0$. Let $(Z,K) = \mathcal{S}(Y,L)$ and $(\tilde{Z},\tilde{K}) = \mathcal{S}(\tilde{Y},\tilde{L})$ be the solutions of the corresponding Skorokhod problems. We have
  \begin{equation*}
    \|Z-\tilde{Z}\|_{p,[0,T]}  
    \leq C\Big(\|Y-\tilde{Y}\|_{p,[0,T]} + | Y_0-\tilde{Y}_0| + \|L-\tilde{L}\|_{p,[0,T]} + |L_0-\tilde{L}_0|\Big)
  \end{equation*}
  and 
  \begin{equation*}
    \|K-\tilde{K}\|_{p,[0,T]} 
    \leq C\Big(\|Y-\tilde{Y}\|_{p,[0,T]} + | Y_0-\tilde{Y}_0| + \|L-\tilde{L}\|_{p,[0,T]} + |L_0-\tilde{L}_0|\Big),
  \end{equation*}
  where the constant $C$ depends only on the dimension $n$.
\end{theorem}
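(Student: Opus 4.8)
The plan is to reduce the statement to a one-dimensional estimate for the reflector term $K$ and then to exploit the explicit formula for the one-sided Skorokhod map together with its complementarity structure. First I would observe that, since both the constraint $Z^i \geq L^i$ and the minimality condition \eqref{eq:K is minimal} decouple across coordinates, the Skorokhod problem splits into $n$ independent one-dimensional problems, for which the solution is given explicitly by $K^i_t = \sup_{0 \leq s \leq t}[L^i_s - Y^i_s]^+$ and $Z^i = Y^i + K^i$. Because the Euclidean $p$-variation of a vector-valued path is comparable, up to a constant depending only on $n$, to the sum of the $p$-variations of its components, it suffices to prove the two displayed estimates coordinate-wise; this is also the source of the dimensional dependence of $C$. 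Moreover, since $Z = Y + K$ and $\tilde Z = \tilde Y + \tilde K$, the triangle inequality and subadditivity of $\|\cdot\|_{p}$ reduce the estimate for $Z - \tilde Z$ to the one for $K - \tilde K$ (the term $\|Y - \tilde Y\|_{p}$ being already present on the right-hand side), so it is enough to establish the single scalar bound
\begin{equation*}
  \|K - \tilde K\|_{p,[0,T]} \leq C\big(\|Y - \tilde Y\|_{p,[0,T]} + |Y_0 - \tilde Y_0| + \|L - \tilde L\|_{p,[0,T]} + |L_0 - \tilde L_0|\big).
\end{equation*}

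Next, writing $U := L - Y$ and $\tilde U := \tilde L - \tilde Y$, one has $K_t = [\,\overline U_t\,]^+$ with $\overline U_t := \sup_{0 \leq s \leq t} U_s$, and the elementary bounds $\|U - \tilde U\|_{p} \leq \|L - \tilde L\|_{p} + \|Y - \tilde Y\|_{p}$ and $|U_0 - \tilde U_0| \leq |L_0 - \tilde L_0| + |Y_0 - \tilde Y_0|$ absorb two of the four terms. Since $x \mapsto [\,x\,]^+$ is $1$-Lipschitz, the task is further reduced to a $p$-variation Lipschitz estimate for the running-supremum map $U \mapsto \overline U$. I would prove this by fixing a partition $\{0 = t_0 < \cdots < t_N = T\}$, bounding each increment $(\overline U - \overline{\tilde U})_{t_j,t_{j+1}}$, and summing the $p$-th powers; the superadditivity of $(s,t) \mapsto \|U - \tilde U\|_{p,[s,t]}^p$ then collapses the local contributions into the global seminorm $\|U - \tilde U\|_{p,[0,T]}^p$.

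The crux is the per-interval estimate, and here I would use the complementarity structure rather than the formula alone. On an interval $[s,t]$ on which, say, $\overline U$ increases by more than $\overline{\tilde U}$, the running supremum of $U$ attains a new maximum at some $r^\ast \in (s,t]$, at which point the first reflected path is in contact with its barrier, i.e.\ $Z_{r^\ast} = L_{r^\ast}$, while $\tilde Z_{r^\ast} \geq \tilde L_{r^\ast}$. Subtracting and using $Z = Y + K$ gives $(K - \tilde K)_{r^\ast} \leq (L - \tilde L)_{r^\ast} - (Y - \tilde Y)_{r^\ast}$, and since $\tilde K$ is non-decreasing and $K_t = K_{r^\ast}$, this transfers to an upper bound for the increment $(K - \tilde K)_{s,t}$ in terms of the increments of $L - \tilde L$ and $Y - \tilde Y$ over $[s,t]$, which are exactly the differences we want. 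Each such local increment is controlled by $\|L - \tilde L\|_{p,[s,t]} + \|Y - \tilde Y\|_{p,[s,t]}$, and the symmetric case where $\overline{\tilde U}$ dominates is handled by interchanging the roles of the two systems.

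The main obstacle I anticipate is that the naive per-interval bound also produces boundary terms measuring the ``slack'' $Z_s - L_s$ and $\tilde Z_s - \tilde L_s$ at the left endpoint, which are nonlocal and, with the wrong sign, are not directly controlled by the local $p$-variation of the differences. Resolving this is the technical heart of the argument: one must either show these slack terms telescope across the partition (using that slack is reduced precisely when the reflector increases, i.e.\ on the contact set), or pass to increments measured from the last contact time, so that the surviving contributions are genuinely those of $L - \tilde L$ and $Y - \tilde Y$ rather than of the individual paths. A secondary source of care, specific to the c\`adl\`ag setting, is that suprema need not be attained and the contact relation must be read off from left limits at jump times; this I would handle by a routine approximation of $r^\ast$ together with the right-continuity of $t \mapsto \|U\|_{p,[s,t]}$.
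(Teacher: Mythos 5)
This statement is quoted verbatim from \cite{Falkowski2015} (Theorem~2.2 there); the paper gives no proof of its own, so your proposal can only be judged on its own merits. Your reductions are sound and follow the standard route: the problem decouples into $n$ scalar problems (whence the dimensional constant, via comparison of $|\cdot|$ with coordinatewise $p$-variation), the one-sided reflector is $K_t = \sup_{0\le s\le t}[L_s - Y_s]^+$, and the bound for $Z - \tilde Z$ follows from the one for $K - \tilde K$ by the triangle inequality. But two steps are placeholders rather than proofs. First, ``since $[\,\cdot\,]^+$ is $1$-Lipschitz, reduce to the running supremum'' is not valid at the level of $p$-variation seminorms: for Lipschitz $\phi$ one does not have $|\phi(a_t)-\phi(b_t)-\phi(a_s)+\phi(b_s)| \le |a_{s,t}-b_{s,t}|$, and indeed $\|\phi(A)-\phi(B)\|_{p}$ is not controlled by $\|A-B\|_{p}+|A_0-B_0|$ in general (take $A$ oscillating $N$ times across $0$ with amplitude $\epsilon$ and $B=A-2\epsilon$). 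The step can be repaired here, but only by exploiting that $\overline U$ and $\overline{\tilde U}$ are non-decreasing and each crosses zero at most once, so that $[\overline U]^+-[\overline{\tilde U}]^+$ coincides with $0$, then with $\overline U$ (squeezed between $0$ and $\overline U-\overline{\tilde U}$), then with $\overline U-\overline{\tilde U}$ on three consecutive intervals; that argument needs to be made.

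The more serious issue is that the core estimate, $\|\overline U - \overline{\tilde U}\|_{p,[0,T]} \lesssim \|U-\tilde U\|_{p,[0,T]} + |U_0-\tilde U_0|$, is exactly where your proof stops. Writing $D := \overline U - \overline{\tilde U}$, your per-interval computation yields, on an interval $[s,t]$ where $D$ increases, only the level bound $D_t \le (U-\tilde U)_{r^*}+\epsilon$ with $r^*\in(s,t]$ a near-maximizer of $U$ on $[0,t]$; to control the increment $D_{s,t}=D_t-D_s$ one must pair this with a lower bound on $D_s$, and the only available one is $D_s \ge (U-\tilde U)_{q}-\epsilon$ with $q$ a near-maximizer of $\tilde U$ on all of $[0,s]$, which may lie far to the left of $s$. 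The resulting bound $|D_{s,t}|\le|(U-\tilde U)_{q,r^*}|+2\epsilon$ involves intervals $[q,r^*]$ that overlap across the partition, so superadditivity of $(s,t)\mapsto\|U-\tilde U\|_{p,[s,t]}^p$ cannot be invoked to sum the $p$-th powers. You correctly flag this as ``the technical heart,'' but neither proposed fix is carried out nor obviously works: the slack terms do not telescope once absolute values and $p$-th powers are taken, and re-anchoring at last contact times does not by itself render the relevant intervals disjoint. This combinatorial difficulty is precisely what the proof in \cite{Falkowski2015} is built to overcome; as written, your proposal is a correct reduction to an unproved lemma rather than a proof.
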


By setting $\tY_t = Y_0$ and $\tL_t = L_0$ for all $t \in [0,T]$, we see that, under the assumptions of Theorem~\ref{thm:Skorokhod Lipschitz est}, we also have
\begin{equation}\label{eq:bound Skorokhod map}
  \|Z\|_{p,[0,T]} + \|K\|_{p,[0,T]}  \leq C\Big(\|Y\|_{p,[0,T]} + \|L\|_{p,[0,T]}\Big).
\end{equation}

\begin{remark}
  The Lipschitz continuity of the Skorokhod map with respect to the supremum norm is a classical result, see the works \cite{Dupuis1991,Dupuis1993} and \cite{Dupuis1999}, which treat the Skorokhod map on various (intricate) domains and with different types of reflections. Notably, it was observed in \cite{Ferrante2013} that the Skorokhod map~$\mathcal{S}$ is not Lipschitz continuous with respect to H{\"o}lder distances. Hence, it is essential to work with $p$-variation distances to treat reflected differential equations using the Skorokhod map, even when considering continuous driving signals~$A$ and~$X$.
\end{remark}

For later convenience, we collect here various useful estimates for $p$-variation norms.

\begin{lemma}\label{lem:p-variation estimates}
  If $1\leq q\leq p<\infty$, $r\in [1,\infty)$ and $X\in D([0,T];\R^n)$, then
  \begin{equation*}
    \|X\|_{p,[0,T]}\leq \|X\|_{q,[0,T]}, \quad \|X\|_{\infty,[0,T]}\leq |X_0|+\|X\|_{r,[0,T]}\quad \text{and}\quad \|X\|_{p,[0,T]}\leq \|X\|_{q,[0,T]}^{\frac{q}{p}} \|X\|_{r,[0,T]}^{1-\frac{q}{p}}.
  \end{equation*}
\end{lemma}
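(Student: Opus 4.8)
The plan is to prove the three inequalities in Lemma~\ref{lem:p-variation estimates} separately, each following directly from the definition of the $p$-variation seminorm and elementary estimates on finite sums. None of these require the c\`adl\`ag structure beyond what is encoded in the definition, so I would work entirely at the level of arbitrary finite partitions $\cP$ of $[0,T]$.

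For the first inequality $\|X\|_{p,[0,T]}\leq \|X\|_{q,[0,T]}$ when $q\leq p$, I would fix a partition $\cP$ and compare $\sum_{[u,v]\in\cP} d(X_u,X_v)^p$ with $\sum_{[u,v]\in\cP} d(X_u,X_v)^q$. The key elementary fact is that for a finite sequence of nonnegative reals $(a_i)$ one has $\big(\sum a_i^p\big)^{1/p}\leq \big(\sum a_i^q\big)^{1/q}$ whenever $0<q\leq p$; this is the standard monotonicity of $\ell^r$-norms in $r$, which follows from the fact that $\|\cdot\|_{\ell^p}\leq\|\cdot\|_{\ell^q}$ for $p\geq q$. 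Applying this with $a_i = d(X_u,X_v)$ gives the bound for each fixed $\cP$, and taking the supremum over all partitions yields the claim. The middle inequality $\|X\|_{\infty,[0,T]}\leq |X_0|+\|X\|_{r,[0,T]}$ is even simpler: for any $t\in[0,T]$ the two-point partition $\{[0,t]\}$ gives $d(X_0,X_t)\leq\|X\|_{r,[0,t]}\leq\|X\|_{r,[0,T]}$, so $|X_t|\leq |X_0|+|X_t-X_0|\leq |X_0|+\|X\|_{r,[0,T]}$, and taking the supremum over $t$ finishes it. (Here I read $\|X\|_{\infty,[0,T]}$ as $\sup_t|X_t|$, treating the Banach-space case; in a general metric space the same estimate holds with $d(X_0,X_t)$ in place of $|X_t-X_0|$.)

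The third inequality $\|X\|_{p,[0,T]}\leq \|X\|_{q,[0,T]}^{q/p}\|X\|_{r,[0,T]}^{1-q/p}$ is an interpolation estimate and is the one I would treat most carefully. Fixing a partition $\cP$, I would write $d(X_u,X_v)^p = d(X_u,X_v)^q\cdot d(X_u,X_v)^{p-q}$ and bound the second factor by $d(X_u,X_v)^{p-q}\leq \|X\|_{r,[0,T]}^{p-q}$ (valid since each single increment is controlled by the full $r$-variation, using $\|X\|_{r,[u,v]}\leq\|X\|_{r,[0,T]}$). This pulls the $r$-variation factor out of the sum:
\begin{equation*}
  \sum_{[u,v]\in\cP} d(X_u,X_v)^p \leq \|X\|_{r,[0,T]}^{p-q}\sum_{[u,v]\in\cP} d(X_u,X_v)^q \leq \|X\|_{r,[0,T]}^{p-q}\,\|X\|_{q,[0,T]}^q.
\end{equation*}
Taking the supremum over $\cP$ and then the $p$-th root gives $\|X\|_{p,[0,T]}\leq \|X\|_{r,[0,T]}^{(p-q)/p}\|X\|_{q,[0,T]}^{q/p}$, which is exactly the stated bound since $(p-q)/p = 1-q/p$.

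The main (modest) obstacle is the third inequality, specifically justifying that each individual increment is dominated by the global $r$-variation seminorm; this is where one must be slightly careful, since $\|X\|_{r,[0,T]}$ is a supremum over partitions and one needs $d(X_u,X_v)\leq\|X\|_{r,[u,v]}\leq\|X\|_{r,[0,T]}$, the first step being the trivial one-interval partition and the second being monotonicity of $[s,t]\mapsto\|X\|_{r,[s,t]}$ recalled in the preliminaries. Everything else is a routine application of the monotonicity of $\ell^r$-norms and the definition of the seminorm, so I expect the full proof to be short.
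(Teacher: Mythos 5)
Your proof is correct and follows essentially the same route as the paper: the first inequality via monotonicity of $\ell^r$-norms applied partition-by-partition, the second via $|X_t|\leq|X_0|+|X_{0,t}|$, and the third by factoring $d(X_u,X_v)^p=d(X_u,X_v)^q\,d(X_u,X_v)^{p-q}$ and bounding each increment by the global $r$-variation. The only cosmetic difference is that the paper bounds the extracted factor as $\sup_{s,t}|X_{s,t}|^{p-q}\leq\|X\|_{r,[0,T]}^{p-q}$ in one step rather than passing through $\|X\|_{r,[u,v]}$, which is the same observation.
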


\begin{proof}
  The first inequality follows immediately from the corresponding result for classical $l^p$ spaces. The second inequality is straightforward to see by noting that 
  \begin{equation*}
    |X_t|\leq |X_0|+|X_{0,t}| \leq |X_0|+ \|X\|_{r,[0,T]}
  \end{equation*}
  for every $t \in [0,T]$. For the third inequality we observe that
  \begin{equation*}
    \|X\|_{p,[0,T]}^p \leq \|X\|_{q,[0,T]}^{q} \bigg( \sup_{s,t\in [0,T]}|X_{s,t}|^{p - q}\bigg)
    \leq \|X\|_{q,[0,T]}^{q} \|X\|_{r,[0,T]}^{p - q},
  \end{equation*}
  and the result follows.
\end{proof}

\begin{lemma}\label{lemma monotone paths p-var}
  There exists a constant $C_n$, depending only on the dimension $n$, such that
  \begin{equation}\label{eq:1var leq Cn pvar}
    \|X\|_{1,[0,T]} \leq C_n\|X\|_{p,[0,T]}
  \end{equation}
  for every monotone path $X \colon [0,T] \to \R^n$ (i.e.~any path $X$ such that each of its $n$ components $X^i \colon [0,T] \to \R$, $i = 1,\ldots,n$, is monotone) and every $p \geq 1$.

  Moreover, we may take $C_1 = 1$, so that for any one-dimensional monotone path $X$, the $p$-variation norm $\|X\|_{p,[0,T]}$ is independent of $p$.
\end{lemma}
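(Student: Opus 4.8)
The plan is to exploit the fact that monotonicity forces the increments of each component to have a constant sign, which collapses the $1$-variation into a single telescoping sum. Concretely, for a one-dimensional monotone component $X^i$ and any partition $\mathcal{P} = \{[u_j,u_{j+1}]\}$ of $[0,T]$, the increments $X^i_{u_j,u_{j+1}}$ all share the same sign, so $\sum_j |X^i_{u_j,u_{j+1}}| = \big|\sum_j X^i_{u_j,u_{j+1}}\big| = |X^i_{0,T}|$, independently of $\mathcal{P}$. Taking the supremum over partitions then gives $\|X^i\|_{1,[0,T]} = |X^i_{0,T}|$.

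For the general bound I would pass from the Euclidean norm on $\R^n$ to the component sum. Using $|v| \leq \sum_{i=1}^n |v^i|$ for $v \in \R^n$, for any partition I have $\sum_j |X_{u_j,u_{j+1}}| \leq \sum_{i=1}^n \sum_j |X^i_{u_j,u_{j+1}}|$, and taking suprema yields $\|X\|_{1,[0,T]} \leq \sum_{i=1}^n \|X^i\|_{1,[0,T]} = \sum_{i=1}^n |X^i_{0,T}|$ by the first step. Cauchy--Schwarz then gives $\sum_{i=1}^n |X^i_{0,T}| \leq \sqrt{n}\,|X_{0,T}|$, and since the trivial partition $\{[0,T]\}$ is admissible in the definition of the $p$-variation, $|X_{0,T}| \leq \|X\|_{p,[0,T]}$. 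Combining these, $\|X\|_{1,[0,T]} \leq \sqrt{n}\,\|X\|_{p,[0,T]}$, so $C_n = \sqrt{n}$ works.

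For the sharp one-dimensional statement, the lower bound $|X_{0,T}| \leq \|X\|_{p,[0,T]}$ again follows from the trivial partition, while Lemma~\ref{lem:p-variation estimates} (with $q=1$) supplies the reverse inequality $\|X\|_{p,[0,T]} \leq \|X\|_{1,[0,T]} = |X_{0,T}|$, valid for every path. Hence for one-dimensional monotone $X$ we obtain $\|X\|_{p,[0,T]} = |X_{0,T}|$ for all $p \geq 1$, which simultaneously yields $C_1 = 1$ and the claimed independence of $p$.

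I do not anticipate any substantial obstacle here: the whole argument rests on the single observation that monotone increments telescope, after which only elementary norm comparisons and the monotonicity of the $p$-variation in $p$ (from Lemma~\ref{lem:p-variation estimates}) are needed. The only point requiring mild care is the passage between the Euclidean and $\ell^1$ norms on $\R^n$, which is precisely where the dimensional constant $\sqrt{n}$ enters.
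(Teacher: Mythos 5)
Your proof is correct and follows essentially the same route as the paper's: both rest on the observation that monotone increments telescope componentwise, reducing the $1$-variation to $\sum_i |X^i_{0,T}|$, followed by a comparison between the $\ell^1$ and Euclidean norms on $\R^n$ and the trivial-partition bound $|X_{0,T}| \leq \|X\|_{p,[0,T]}$. The only (cosmetic) difference is that you extract the explicit constant $C_n = \sqrt{n}$ via Cauchy--Schwarz, whereas the paper appeals abstractly to the equivalence of norms on finite-dimensional spaces.
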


\begin{proof}
  It is clear that
  \begin{equation}\label{eq:x0T leq pvar leq 1var}  
    |X_{0,T}| \leq \|X\|_{p,[0,T]} \leq \|X\|_{1,[0,T]}
  \end{equation}
  for any path $X$ and any $p \geq 1$. Suppose now that each of the components $X^i \colon [0,T] \to \R$, $i = 1,\ldots,n$, is monotone. Let us consider the $p$-variation of $X$ with distance in $\R^n$ measured using the $l^1$-norm rather than the usual Euclidean $l^2$-norm, so that $|x| = \sum_{i=1}^n |x^i|$. Since $X$ is monotone, it is then straightforward to see that $\|X\|_{1,[0,T]} = |X_{0,T}|$. Combining this with \eqref{eq:x0T leq pvar leq 1var}, we obtain $\|X\|_{1,[0,T]} = \|X\|_{p,[0,T]}$ for every $p \geq 1$.
 
  To change back to the usual Euclidean norm, we recall that norms on finite-dimensional spaces are equivalent, so that~\eqref{eq:1var leq Cn pvar} holds for a suitable constant $C_n$.
\end{proof}

\begin{lemma}\label{lem:p-variation have open interval}
  Let $X \in D^p([0,T];\R^n)$. 
  For any $0 \leq s < t \leq T$, we have
  \begin{equation*}
    \Big(\|X\|_{p,[s,t)}^p + |\Delta X_t|^p\Big)^{\frac{1}{p}} \leq \|X\|_{p,[s,t]} \leq \|X\|_{p,[s,t)} + |\Delta X_t|
  \end{equation*}
  where we recall that $\|X\|_{p,[s,t)}:= \sup_{u < t} \|X\|_{p,[s,u]}$.
\end{lemma}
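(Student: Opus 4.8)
The plan is to prove the two inequalities separately, in both cases reducing to a statement about finite partitions and then passing to the limit $u \uparrow t$, exploiting that $X$ admits a left limit $X_{t-}$ at $t$ and that $t \mapsto \|X\|_{p,[s,t]}$ is nondecreasing, so that $\|X\|_{p,[s,t)} = \lim_{u \uparrow t}\|X\|_{p,[s,u]}$.

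For the lower bound, I would fix $u$ with $s \le u < t$ and observe that appending the single interval $[u,t]$ to any partition $\mathcal{P}$ of $[s,u]$ yields a partition of $[s,t]$. Summing the $p$-th powers of the increments therefore gives $\sum_{[a,b]\in\mathcal{P}}|X_{a,b}|^p + |X_{u,t}|^p \le \|X\|_{p,[s,t]}^p$, and taking the supremum over all $\mathcal{P}$ of $[s,u]$ gives $\|X\|_{p,[s,u]}^p + |X_{u,t}|^p \le \|X\|_{p,[s,t]}^p$. Letting $u \uparrow t$ and using $X_{u,t} = X_t - X_u \to X_t - X_{t-} = \Delta X_t$ together with $\|X\|_{p,[s,u]} \to \|X\|_{p,[s,t)}$ yields the claimed lower bound.

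For the upper bound, I would take an arbitrary partition $s = u_0 < \cdots < u_m < u_{m+1} = t$ of $[s,t]$ and split its last increment as $X_{u_m,t} = X_{u_m,t-} + \Delta X_t$. Writing $|X_{u_m,t}| \le |X_{u_m,t-}| + |\Delta X_t|$ and using the monotonicity of the $\ell^p$ norm in each (nonnegative) coordinate followed by Minkowski's inequality applied to the two sequences $(|X_{u_0,u_1}|,\dots,|X_{u_{m-1},u_m}|,|X_{u_m,t-}|)$ and $(0,\dots,0,|\Delta X_t|)$, I would bound $\big(\sum_{i=0}^{m-1}|X_{u_i,u_{i+1}}|^p + |X_{u_m,t}|^p\big)^{1/p}$ by $\big(\sum_{i=0}^{m-1}|X_{u_i,u_{i+1}}|^p + |X_{u_m,t-}|^p\big)^{1/p} + |\Delta X_t|$. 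The first term is at most $\|X\|_{p,[s,t)}$: for any $v$ with $u_m < v < t$ the partition $\{u_0,\dots,u_m,v\}$ of $[s,v]$ bounds the corresponding finite sum by $\|X\|_{p,[s,v]}^p \le \|X\|_{p,[s,t)}^p$, and letting $v \uparrow t$ recovers the term involving $X_{u_m,t-}$. Taking the supremum over partitions of $[s,t]$ finishes the proof; the degenerate case of the trivial partition $\{[s,t]\}$ (i.e.~$m=0$) is covered by the same computation.

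The main obstacle is the careful passage to the limit at the left endpoint of $t$: both inequalities hinge on replacing the honest increment $X_{u,t}$ (respectively $X_{u_m,t-}$) by quantities defined through limits, so I must justify the interchange of the supremum defining the $p$-variation with the limits $u \uparrow t$ and $v \uparrow t$. This is where the nondecreasing, right-continuous structure of $t \mapsto \|X\|_{p,[s,t]}$ and the definition of $\|X\|_{p,[s,t)}$ as a supremum (hence a monotone limit) are essential, and where I would take the most care to avoid circular or invalid limit exchanges.
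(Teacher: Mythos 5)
Your proposal is correct and follows essentially the same route as the paper: the lower bound comes from appending the point $t$ to partitions of $[s,u]$ (a direct form of the superadditivity the paper invokes) and letting $u \uparrow t$, while the upper bound splits the last increment as $X_{u_m,t-}+\Delta X_t$ and applies Minkowski's inequality before taking the supremum over partitions. The only difference is that you spell out the limit $v \uparrow t$ justifying the bound by $\|X\|_{p,[s,t)}$, a detail the paper leaves implicit.
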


\begin{proof}
  For the first inequality, note that
  \begin{equation*}
    \|X\|_{p,[s,u]}^p + \|X\|_{p,[u,t]}^p \leq \|X\|_{p,[s,t]}^p
  \end{equation*}
  for all $s < u < t$, and take the limit as $u \nearrow t$.

  For the second inequality, let $\cP = \{s = u_0 < u_1 < \cdots < u_n = t\}$ be a partition of the interval $[s,t]$. By Minkowski's inequality, we have
  \begin{align*}
    \bigg(\sum_{i=0}^{n-1} |X_{u_i,u_{i+1}}|^p\bigg)^{\frac{1}{p}} 
    &=\bigg(\sum_{i=0}^{n-2} |X_{u_i,u_{i+1}}|^p + |X_{u_{n-1},t-} + \Delta X_t|^p\bigg)^{\frac{1}{p}}\\
    & \leq \bigg(\sum_{i=0}^{n-2} |X_{u_i,u_{i+1}}|^p+|X_{u_{n-1},t-}|^p\bigg)^{\frac{1}{p}} + |\Delta X_t|,
  \end{align*}
  and we conclude by taking the supremum over all partitions $\cP$ of the interval $[s,t]$.
\end{proof}

\subsection{Existence and uniqueness result}

In this subsection we show the existence of a unique solution to the reflected Young differential equation~\eqref{eq:reflected Young equation}. We recall that we call $(Y,K)$ a solution to the reflected Young differential equation~\eqref{eq:reflected Young equation} driven by $A \in D^q([0,T];\R^d)$ and $X \in D^p([0,T];\R^n)$ with reflecting barrier $L \in D^p([0,T];\R^n)$ if $Y, K \in D^p([0,T];\R^n)$ satisfy~\eqref{eq:reflected Young equation}, and if for every $i = 1,\dots,n$,
\begin{enumerate}[(a)]
  \item $Y_t^i \geq L^i_t$ for all $t \in [0,T]$,
  \item $K^i\colon[0,T]\to \R$ is a non-decreasing function such that $K^i_0 = 0$, and
        \begin{equation*}
          \int_0^t (Y^i_s-L^i_s)\dd K^i_s = 0 \qquad \text{for every} \quad t \in [0,T].
        \end{equation*}
\end{enumerate}

\begin{theorem}\label{thm:existence uniqueness young case}
  Let $T>0$, $f \in C_b^2$, $q \in [1,2)$ and $p\in [q,\infty)$ such that $1/p+1/q>1$. Let $y \in \R^n$, $A\in D^q([0,T];\R^d)$, $X \in D^p([0,T];\R^n)$ and $L \in D^p([0,T];\R^n)$ such that $y \geq L_0$. There exists a unique solution $(Y,K) $ to the reflected Young differential equation~\eqref{eq:reflected Young equation}.
\end{theorem}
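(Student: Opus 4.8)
The plan is to establish existence and uniqueness via a Banach fixed point argument on an appropriate complete metric space, combining the Young integral estimate~\eqref{eq:Young integral estimate} with the Lipschitz estimates for the Skorokhod map from Theorem~\ref{thm:Skorokhod Lipschitz est}. The key observation is that a solution $(Y,K)$ of~\eqref{eq:reflected Young equation} can be reconstructed from the Skorokhod map: if we define $\Phi(\tilde{Y})_t := y + \int_0^t f(\tilde{Y}_s)\dd A_s + X_t$, then $(Y,K) = \cS(\Phi(\tilde{Y}),L)$ and a solution corresponds to a fixed point $Y = \cS_1(\Phi(Y),L)$. Thus I would define the map $\cM(\tilde{Y}) := \cS_1(\Phi(\tilde{Y}),L)$ on the space $D^p([0,T];\R^n)$ and show it has a unique fixed point.

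First I would verify that $\cM$ maps the relevant space into itself. By the Young estimate~\eqref{eq:Young integral estimate}, the map $\tilde{Y}\mapsto \int_0^\cdot f(\tilde{Y}_s)\dd A_s$ sends paths of finite $p$-variation to paths of finite $q$-variation (hence finite $p$-variation since $p\geq q$), so $\Phi(\tilde{Y})\in D^p([0,T];\R^n)$; then~\eqref{eq:bound Skorokhod map} ensures $\cS_1(\Phi(\tilde{Y}),L)$ has finite $p$-variation as well. For the contraction property, given two inputs $\tilde{Y},\tilde{Y}'$ agreeing at time $0$, Theorem~\ref{thm:Skorokhod Lipschitz est} gives
\begin{equation*}
  \|\cM(\tilde{Y})-\cM(\tilde{Y}')\|_{p,[0,s]} \leq C\Big\|\int_0^\cdot \big(f(\tilde{Y}_r)-f(\tilde{Y}'_r)\big)\dd A_r\Big\|_{p,[0,s]},
\end{equation*}
and the right-hand side is controlled, via the Young estimate and the $C_b^2$-regularity of $f$, by a factor involving $\|A\|_{q,[0,s]}$ times $\|\tilde{Y}-\tilde{Y}'\|_{p,[0,s]}$ (together with the $\infty$-norm bound from Lemma~\ref{lem:p-variation estimates}). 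The technical point is that the composition $f(\tilde{Y})-f(\tilde{Y}')$ must be estimated in $q$-variation; using the mean value theorem and boundedness of $\D f$, this introduces both a supremum-norm difference and a $p$-variation difference of $\tilde{Y}-\tilde{Y}'$, which I would control on a short interval $[0,s]$ where $\|A\|_{q,[0,s]}$ is small.

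The main obstacle is that on a fixed full interval $[0,T]$ the map $\cM$ need not be a contraction, because the Lipschitz constant scales with $\|A\|_{q,[0,T]}$ which is not small in general. I would address this by first proving existence and uniqueness on a sufficiently short initial interval $[0,T_1]$ where $C\cdot C_{p,q}\|A\|_{q,[0,T_1]}\|f\|_{C_b^1}$-type quantities force a genuine contraction, and then concatenating solutions across successive intervals to cover all of $[0,T]$. Here the right-continuity of $t\mapsto \|A\|_{q,[0,t]}$ (recalled in the basic notation) guarantees that one can choose intervals with controlled $q$-variation; however, since $A$ and $X$ may have jumps, the $p$-variation norm need not vanish as the interval shrinks, so the partition of $[0,T]$ must be chosen to control the continuous part of the variation while handling jump times separately — the superadditivity of $\|A\|_{q,\cdot}^q$ ensures only finitely many such subintervals are needed. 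Patching the pieces together requires checking that the minimality condition~\eqref{eq:K is minimal} and the barrier constraint are preserved under concatenation, which follows from the uniqueness of the Skorokhod problem on each subinterval and its compatibility with restriction.
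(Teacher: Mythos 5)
Your proposal is correct and follows essentially the same route as the paper: a Banach fixed point argument for the map $Y \mapsto \cS_1\big(y + \int_0^\cdot f(Y_r)\dd A_r + X,\, L\big)$ on a short initial interval where $\|A\|_{q}$ is small, followed by concatenation over finitely many subintervals (guaranteed by superadditivity and right-continuity of the variation functions), with the jumps at the partition points handled separately via the explicit formula for $\Delta K$. One small imprecision: the composition $f(\tilde{Y})-f(\tilde{Y}')$ only needs to be estimated in $p$-variation, not $q$-variation, since $1/p+1/q>1$ already suffices for Young integration against $A\in D^q$; this is exactly what the paper's Lemma~\ref{lem:Young contraction} provides, working on the invariant ball $\{\|Y\|_{p,[0,t]}\le 1\}$ so that the constants in the composition estimate stay uniform.
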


Before coming to the proof of Theorem~\ref{thm:existence uniqueness young case}, we first need the following stability result regarding Young integration.

\begin{lemma}\label{lem:Young contraction}
  Let $f \in C_b^2$, and let $q \in [1,2)$ and $p\geq q$ such that $1/p+1/q>1$. Let $A,\tilde{A} \in D^q([0,T];\R^d)$, $Y,\tY \in D^p([0,T];\R^n)$, and suppose there exists some $t \in (0,T]$ such that $\|\tY\|_{p,[0,t]}\leq 1$. Then
  \begin{align*}
    &\bigg\|\int_0^\cdot f(Y_r)\dd A_r - \int_0^\cdot f(\tY_r)\dd \tA_r\bigg\|_{p,[0,t]}\\
    &\quad\leq C_{p,q}\|f\|_{C^2_b}\Big(\Big(|Y_0 - \tY_0| + \|Y - \tY\|_{p,[0,t]}\Big)\|A\|_{q,[0,t]} + \|A - \tA\|_{q,[0,t]}\Big),
  \end{align*}
  where the constant $C_{p,q}$ depends only on $p$ and $q$.
\end{lemma}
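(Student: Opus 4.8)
The plan is to estimate the $p$-variation of the difference of two Young integrals over $[s,t]\subset[0,t]$ by decomposing it into the difference of the increments of the two integrands and the difference of the driving signals. Writing $I_\cdot := \int_0^\cdot f(Y_r)\dd A_r$ and $\tilde I_\cdot := \int_0^\cdot f(\tY_r)\dd \tA_r$, the first step is to control the local increment $(I - \tilde I)_{s,t}$. The natural splitting is
\begin{equation*}
  (I - \tilde I)_{s,t} = \int_s^t \big(f(Y_r) - f(\tY_r)\big)\dd A_r + \int_s^t f(\tY_r)\dd (A - \tA)_r.
\end{equation*}
To each term I would apply the Young estimate \eqref{eq:Young integral estimate}, which bounds the integral by its left-point Riemann approximation plus a remainder controlled by the $q$-variation of the integrand times the $p$-variation (here $q$-variation, since $A,\tA\in D^q$) of the integrator. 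So for a generic integrand $g$ and integrator $B$ one gets $|\int_s^t g_r\dd B_r| \le |g_s||B_{s,t}| + C_{p,q}\|g\|_{q,[s,t)}\|B\|_{q,[s,t]}$.

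The key technical inputs are then the standard composition estimates for $f\in C^2_b$. For the first term I need to bound both $|f(Y_s)-f(\tY_s)|$ (which is $\le \|\D f\|_\infty(|Y_0-\tY_0| + \|Y-\tY\|_{\infty,[0,t]})$ via the supremum bound, and $\|X\|_\infty \le |X_0| + \|X\|_{r}$ from Lemma~\ref{lem:p-variation estimates}) and the $q$-variation seminorm $\|f(Y)-f(\tY)\|_{q,[s,t)}$. For the latter I would write, for an increment over $[u,v]$,
\begin{equation*}
  \big(f(Y)-f(\tY)\big)_{u,v} = \big(f(Y_v)-f(Y_u)\big) - \big(f(\tY_v)-f(\tY_u)\big),
\end{equation*}
use the mean-value/Taylor expansion for $f$ and $\D f$ (this is where the $C^2_b$ bound enters), and collect terms to obtain the standard bound
\begin{equation*}
  \|f(Y)-f(\tY)\|_{q,[s,t)} \lesssim \|f\|_{C^2_b}\Big(\|Y-\tY\|_{q,[s,t)} + \big(|Y_0-\tY_0| + \|Y-\tY\|_{\infty}\big)\big(\|Y\|_{q,[s,t)} + \|\tY\|_{q,[s,t)}\big)\Big),
\end{equation*}
together with $\|f(\tY)\|_{q,[s,t)} \lesssim \|f\|_{C^2_b}\|\tY\|_{q,[s,t)}$ and $\|f(\tY)\|_\infty \le \|f\|_\infty$ for the second term. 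Since $p\ge q$, the first inequality of Lemma~\ref{lem:p-variation estimates} lets me replace $q$-variation seminorms by $p$-variation ones wherever convenient, and the normalization hypothesis $\|\tY\|_{p,[0,t]}\le 1$ is used precisely to absorb the factors $\|\tY\|_{q,[0,t]}\le\|\tY\|_{p,[0,t]}\le 1$ so that they do not appear as free multipliers in the final bound.

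Having bounded the local increment $(I-\tilde I)_{s,t}$, I would take the $p$-th power, sum over a partition, and take the supremum to pass from local increments to the $p$-variation norm; superadditivity of $(s,t)\mapsto\|\cdot\|_{q,[s,t]}^q$ (recalled in the notation section) handles the cross terms cleanly, as the Young-remainder contributions are of the form $\sum \|\cdot\|_{q,[u_i,u_{i+1})}\|\cdot\|_{q,[u_i,u_{i+1}]}$ which telescopes to a product of global seminorms. The main obstacle I anticipate is purely bookkeeping rather than conceptual: carefully separating the genuinely ``second-order'' terms (those carrying a product of two increments, which require the $\D f$ Lipschitz bound and hence $C^2_b$) from the first-order terms, and ensuring that every appearance of a factor $\|\tY\|$ is of $q$- or $p$-variation type so that the normalization $\|\tY\|_{p,[0,t]}\le1$ can be invoked to suppress it, leaving the claimed clean right-hand side with the single factor $\|A\|_{q,[0,t]}$ on the $Y-\tY$ term and the standalone $\|A-\tA\|_{q,[0,t]}$ term.
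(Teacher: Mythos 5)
Your decomposition and overall strategy coincide with the paper's proof: split the difference into $\int_s^u(f(Y_r)-f(\tY_r))\dd A_r$ plus $\int_s^u f(\tY_r)\dd(A-\tA)_r$, apply the Young estimate \eqref{eq:Young integral estimate} to each piece, control the composition terms via the standard $C^2_b$ estimate (the paper cites \cite[Lemma~3.1]{Friz2018} for this), and use $\|\tY\|_{p,[0,t]}\le 1$ to absorb the $f(\tY)$ factors. There is, however, a consistent exponent slip that you should fix: the integrands $f(Y)-f(\tY)$ and $f(\tY)$ are only known to lie in $D^p$ (since $Y,\tY\in D^p$ and possibly $p>q$), so the $q$-variation seminorms appearing throughout your displayed bounds --- $\|f(Y)-f(\tY)\|_{q,[s,t)}$, $\|Y-\tY\|_{q,[s,t)}$, $\|Y\|_{q,[s,t)}$, $\|\tY\|_{q,[s,t)}$ --- may be infinite. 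The correct application of \eqref{eq:Young integral estimate} here measures the integrand in $p$-variation and the integrator $A$ (resp.\ $A-\tA$) in $q$-variation, which is admissible precisely because $1/p+1/q>1$. Relatedly, Lemma~\ref{lem:p-variation estimates} only gives $\|\cdot\|_{p}\le\|\cdot\|_{q}$, so it cannot be used to ``replace $q$-variation seminorms by $p$-variation ones'' as you claim; in the paper it is used in the opposite direction, at the very end, to bound the $\|A\|_{p,[0,t]}$ and $\|A-\tA\|_{p,[0,t]}$ contributions (coming from the left-point terms $|f(Y_s)-f(\tY_s)||A_{s,u}|$ and $|f(\tY_s)||A_{s,u}-\tA_{s,u}|$) by the corresponding $q$-variation norms. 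With the exponents corrected in this way, your argument is exactly the paper's.
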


\begin{proof}
  For any subinterval $[s,u] \subseteq [0,t]$, we have
  \begin{align*}
    \bigg|\int_s^u f(Y_r)\dd A_r - \int_s^u f(\tY_r)\dd \tA_r\bigg|
    &\leq \bigg|\int_s^u(f(Y_r) - f(\tY_r))\dd A_r\bigg| + \bigg|\int_s^u f(\tY_r)\dd (A - \tA)_r\bigg|\\
    &\lesssim |f(Y_s) - f(\tY_s)||A_{s,u}| + \|f(Y) - f(\tY)\|_{p,[s,u)}\|A\|_{q,[s,u]}\\
    &\qquad + |f(\tY_s)||A_{s,u} - \tA_{s,u}| + \|f(\tY)\|_{p,[s,u)}\|A - \tA\|_{q,[s,u]},
  \end{align*}
  where we applied~\eqref{eq:Young integral estimate} to obtain the last inequality. Hence, we deduce that
  \begin{align*}
    &\bigg\|\int_0^\cdot f(Y_r)\dd A_r - \int_0^\cdot f(\tY_r)\dd \tA_r\bigg\|_{p,[0,t]}\\
    &\lesssim \|f(Y) - f(\tY)\|_\infty\|A\|_{p,[0,t]} + \|f(Y) - f(\tY)\|_{p,[0,t]}\|A\|_{q,[0,t]}\\
    &\qquad + \|f(\tY)\|_\infty\|A - \tA\|_{p,[0,t]} + \|f(\tY)\|_{p,[0,t]}\|A - \tA\|_{q,[0,t]}\\
    &\lesssim \|f\|_{C^2_b}\Big(\Big(|Y_0 - \tY_0| + \|Y - \tY\|_{p,[0,t]}\Big)\Big(\|A\|_{q,[0,t]} + \|A\|_{p,[0,t]}\Big) + \|A - \tA\|_{q,[0,t]} + \|A - \tA\|_{p,[0,t]}\Big),
  \end{align*}
  where in the last line we used the fact that $\|f(\tY)\|_{p,[0,t]} \lesssim \|\tY\|_{p,[0,t]} \leq 1$, and the term $\|f(Y) - f(\tY)\|_{p,[0,t]}$ was bounded using \cite[Lemma~3.1]{Friz2018}. Since $p \geq q$, the first inequality in Lemma~\ref{lem:p-variation estimates} yields the assertion.
\end{proof}

We are now ready to establish the existence of a unique solution to the reflected Young differential equation~\eqref{eq:reflected Young equation}, the key ingredients being the Skorokhod map and Banach's fixed point theorem. Recall that the space $D^p([0,T];\R^n)$ is a Banach space with respect to the $p$-variation norm $|X_0| + \|X\|_{p,[0,T]}$ for $X \in D^p([0,T];\R^n)$ (see e.g.~\cite[Proposition~7.2]{Chistyakov1998}).

\begin{proof}[Proof of Theorem~\ref{thm:existence uniqueness young case}]
  \textit{Step~1: Local solution.} For $t \in (0,T]$ we define the map
  \begin{equation*}
    \cM_t \colon D^p([0,t];\R^n) \to D^p([0,t];\R^n)\quad  \text{by}\quad \cM_t(Y) := \cS_1\bigg(y + \int_0^\cdot f(Y_r)\dd A_r+X,\hspace{1pt} L\bigg).
  \end{equation*}
  That is, we have 
  \begin{equation*}
    \cM_t(Y) = y + \int_0^\cdot f(Y_r)\dd A_r + X + K, \quad 
    \text{where}\quad K = \cS_2 \bigg(y + \int_0^\cdot f(Y_r)\dd A_r+X,\hspace{1pt} L\bigg).
  \end{equation*}
  Note that a unique fixed point of the map $ \cM_t$, along with the corresponding process~$K$ obtained from the Skorokhod map~$\cS$, are together the unique solution to the reflected Young differential equation~\eqref{eq:reflected Young equation} over the time interval $[0,t]$. To show the existence of a unique fixed point, it is sufficient to verify that the map~$\cM_t$ satisfies the assumptions of Banach's fixed point theorem (\cite[Theorem~1.A]{Zeidler1986}) for some sufficiently small $t \in (0,T]$.
 
  \textit{Invariance.} We define the closed ball
  \begin{equation*}
    \cB_t := \big\{Y \in D^p([0,t];\R^n)\ \colon\ Y_0 = y,\ \|Y\|_{p,[0,t]} \leq 1\big\}.
  \end{equation*}
  Let $Y \in \cB_t$. By \eqref{eq:Young integral estimate}, for any subinterval $[s,u] \subseteq [0,t]$, we have
  \begin{equation*}
    \bigg|\int_s^u f(Y_r)\dd A_r+X_{s,u}\bigg| \lesssim |f(Y_s)A_{s,u}| + \|f(Y)\|_{p,[s,u)}\|A\|_{q,[s,u]}+|X_{s,u}|,
  \end{equation*}
  from which it follows that
  \begin{align*}
    \bigg\|\int_0^\cdot f(Y_r)\dd A_r + X \bigg\|_{p,[0,t]} 
    &\lesssim \|f(Y)\|_\infty\|A\|_{p,[0,t]} + \|f(Y)\|_{p,[0,t]}\|A\|_{q,[0,t]}
    +\|X\|_{p,[0,t]} \\
    &\lesssim \|f\|_{C^1_b}  \|A\|_{q,[0,t]}  +  \|X\|_{p,[0,t]},
  \end{align*}
  where we have used the first inequality in Lemma~\ref{lem:p-variation estimates}, and the fact that $\|f(Y)\|_{p,[0,t]} \lesssim \|Y\|_{p,[0,t]} \leq 1$. Hence, from the estimate~\eqref{eq:bound Skorokhod map} we get
  \begin{equation*}
    \|\cM_t(Y)\|_{p,[0,t]} \leq C_1\Big(\|f\|_{C^1_b}\|A\|_{q,[0,t]} +  \|X\|_{p,[0,t]} + \|L\|_{p,[0,t]}\Big)
  \end{equation*}
  for some constant $C_1$ depending only on $p, q$ and $n$. Since $A$, $X$ and $L$ are right-continuous, the functions $t\mapsto \|A\|_{q,[0,t]}$, $t\mapsto \|X\|_{p,[0,t]}$ and $t\mapsto \|L\|_{p,[0,t]}$ are non-decreasing and right-continuous, see \cite[Lemma~7.1]{Friz2018}. Hence, there exists $t_1 \in (0,T]$ sufficiently small such that
  \begin{equation*}
    C_1\big(\|f\|_{C^1_b}\|A\|_{q,[0,t_1]} +\|X\|_{p,[0,t_1]} +\|L\|_{p,[0,t_1]}\big) \leq 1
  \end{equation*}
  and it follows that for any $t \in (0,t_1]$, the closed ball $\cB_t$ is invariant under the map $\cM_t$.

  \textit{Contraction property.}
  Let $Y,\tY \in \cB_t$ for some $t \in (0,t_1]$. By Lemma~\ref{lem:Young contraction}, we have
  \begin{equation*}
    \bigg\|\int_0^\cdot f(Y_r)\dd A_r +X - \int_0^\cdot f(\tY_r)\dd A_r-X\bigg\|_{p,[0,t]} \lesssim \|f\|_{C^2_b}\|Y - \tY\|_{p,[0,t]}\|A\|_{q,[0,t]}.
  \end{equation*}
  By the first estimate in Theorem~\ref{thm:Skorokhod Lipschitz est}, we then have that
  \begin{equation*}
    \|\cM_t(Y) - \cM_t(\tY)\|_{p,[0,t]} \leq C_2\|f\|_{C^2_b}\|Y - \tY\|_{p,[0,t]}\|A\|_{q,[0,t]}
  \end{equation*}
  for some constant $C_2$ depending only on $p, q$ and $n$. Choosing $t_2 \in (0,t_1]$ sufficiently small so that $C_2\|f\|_{C^2_b}\|A\|_{q,[0,t_2]} \leq \frac{1}{2}$, it follows that, for any $t \in (0,t_2]$, the map $\cM_t$ is a contraction on $\cB_t$. Applying Banach fixed point theorem provides a unique~$Y\in \cB_t$ (together with a reflector term~$K\in D^1([0,t];\R^n$) satisfying the reflected Young differential equation~\eqref{eq:reflected Young equation} for any $t\in (0,t_2]$. Note that any solution~$\tilde{Y}\in D([0,T];\R^n)$ (and $\tilde K\in D^1([0,T];\R^n)$) to the reflected Young differential equation~\eqref{eq:reflected Young equation} belongs to the ball $\cB_t$ for sufficiently small $t\in (0,t_2]$, and thus $Y\equiv \tilde Y$ and $K\equiv\tilde K$ on $[0,t]$ for any $t\in [0,t_2]$.

  \textit{Step~2: Global solution.} Due to Step~1, we know that there exists a unique solution $(Y,K)$ to the Young differential equation~\eqref{eq:reflected Young equation} on every interval $[s,t)$ provided $\|A\|_{q,[s,t)}$, $\|X\|_{p,[s,t)}$ and $\|L\|_{p,[s,t)}$ are sufficiently small such that
  \begin{equation}\label{eq:condition local existence}
    C_1\big(\|f\|_{C^1_b}\|A\|_{q,[s,t)} +\|X\|_{p,[s,t)} +\|L\|_{p,[s,t)}\big) +  C_2\|f\|_{C^2_b}\|A\|_{q,[s,t)} \leq \frac{1}{2}.
  \end{equation}
  Note that the condition~\eqref{eq:condition local existence} is independent of the initial condition~$y$. By the right-continuity of $A$, $X$ and $L$, for every $\delta >0$ there exists a partition $\mathcal{P}=\{0=t_0<t_1<\dots <t_N=T\}$ of $[0,T]$ such that
  \begin{equation*}
    \|A\|_{q,[t_i,t_{i+1})} +\|X\|_{p,[t_{i},t_{i+1})} +\|L\|_{p,[t_i,t_{i+1})} \leq \delta 
  \end{equation*}
  for all $i = 0,\dots, N-1$. Now we choose $\delta>0$ such that the condition~\eqref{eq:condition local existence} holds for every $[s,t]\in\mathcal{P}$. Hence, we can iteratively obtain a solution $(Y^i,K^i)$ to the reflected Young differential equation~\eqref{eq:reflected Young equation} on each interval $[t_i,t_{i+1})$ with initial condition 
  \begin{equation*}
    Y_{t_i} = Y_{t_i-} + f(Y_{t_i-})\Delta A_{t_i} + \Delta X_{t_i} + \Delta K_{t_i},
  \end{equation*}
  with
  \begin{equation}\label{eq:Delta K Young}
    \Delta K_{t_i} = \big[L_{t_i} - Y_{t_i-} - f(Y_{t_i-})\Delta A_{t_i} - \Delta X_{t_i}\big]^+,
  \end{equation}
  where $[\hspace{1pt}\cdot\hspace{1pt}]^+$ denotes the positive part, in the sense of \eqref{eq:positive part}. The resulting paths $Y, K \in D^p([0,T];\R^n)$ thus provide a solution to the reflected Young differential equation~\eqref{eq:reflected Young equation} on $[0,T]$.
  
  The minimality of the reflector term~$K$ and the perservation of the local jump structure under Young integration (see \cite[Lemma~2.9]{Friz2018}) ensure that \eqref{eq:Delta K Young} is the only valid choice for the jump $\Delta K_{t_i}$. The uniqueness of $(Y,K)$ on $[0,T]$ follows from this and the local uniqueness established in Step~1.
\end{proof}

\subsection{Stability results}

One of the key advantages of a pathwise analysis of stochastic differential equations are pathwise stability results regarding the solution map associated to a differential equation, which maps the driving signals, in our case $A$ and $X$, to the solution~$Y$ of the differential equation. Accordingly, in this subsection we derive stability results for the reflected Young differential equation~\eqref{eq:reflected Young equation}. The first one is with respect to the $p$-variation distance.

\begin{proposition}\label{prop:stability}
  Let $f \in C_b^2$, $q \in [1,2)$ and $p\in [q,\infty)$ such that $1/p + 1/q > 1$. Let $(Y,K)$ and $(\tY,\tK)$ be the unique solutions of the reflected Young differential equation~\eqref{eq:reflected Young equation} given the data $y, \ty \in \R^n$, $A ,\tA \in D^q([0,T];\R^d)$, $X, \tX \in D^p([0,T];\R^n)$ and $L, \tL \in D^p([0,T];\R^n)$ respectively, where as usual $y \geq L_0$ and $\ty \geq \tL_0$. Suppose that the norms $\|A\|_{q,[0,T]}$, $\|\tA\|_{q,[0,T]}$, $\|X\|_{p,[0,T]}$, $\|\tX\|_{p,[0,T]}$, $\|L\|_{p,[0,T]}$ and $\|\tL\|_{p,[0,T]}$ are all bounded by a given constant $M > 0$. Then, 
  \begin{align*}
    &\|Y - \tY\|_{p,[0,T]} + \|K - \tK\|_{p,[0,T]}\\
    & \quad\leq C_{M,f}\bigg(|y - \ty| + \|A - \tA\|_{q,[0,T]} + \|X - \tX\|_{p,[0,T]} + |L_{0} - \tL_{0}| + \|L - \tL\|_{p,[0,T]}\bigg)
  \end{align*}
  for some constant $C_{M,f}$ depending on $M, \|f\|_{C^2_b}, p, q$ and $n$.
\end{proposition}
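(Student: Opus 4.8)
The strategy is to localise. On sufficiently short intervals the solution map is a contraction with constant independent of the initial value (exactly as in Step~1 of the proof of Theorem~\ref{thm:existence uniqueness young case}), which yields a local Lipschitz estimate; these are then patched together by a discrete Gr\"onwall argument whose number of steps is controlled purely through~$M$. To set up, write $\Phi := y + \int_0^\cdot f(Y_r)\dd A_r + X$ and $\tilde\Phi := \ty + \int_0^\cdot f(\tY_r)\dd\tA_r + \tX$, so that $(Y,K) = \cS(\Phi,L)$ and $(\tY,\tK) = \cS(\tilde\Phi,\tL)$, and note that on any subinterval the restriction of $(Y,K)$ solves the Skorokhod problem for the restricted data, by uniqueness of the latter.

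Using the right-continuity and superadditivity of the controls $(s,t)\mapsto\|A\|_{q,[s,t]}^q$, $\|X\|_{p,[s,t]}^p$, $\|L\|_{p,[s,t]}^p$ (and their tilded counterparts), together with the fact that only finitely many jumps exceed any fixed threshold, I choose a partition $0=t_0<t_1<\cdots<t_N=T$, with all large jumps placed at partition points, such that on each half-open interval $[t_i,t_{i+1})$ both the local existence condition~\eqref{eq:condition local existence} and a smallness condition of the form $C\,C_{p,q}\|f\|_{C^2_b}\|A\|_{q,[t_i,t_{i+1})}\le\tfrac12$ hold. Because these are smallness conditions on superadditive controls of total mass bounded in terms of $M$, a greedy construction furnishes such a partition with $N=N(M)$ depending only on $M$ (and $p,q,n,\|f\|_{C^2_b}$); moreover~\eqref{eq:condition local existence} forces $\|Y\|_{p,[t_i,t_{i+1})}\le1$ and $\|\tY\|_{p,[t_i,t_{i+1})}\le1$, since by local uniqueness the restricted solutions coincide with the fixed points living in the unit balls of that proof.

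On each piece I then combine Theorem~\ref{thm:Skorokhod Lipschitz est} with Lemma~\ref{lem:Young contraction}, noting that $\Phi_{t_i}=Y_{t_i}$, $\tilde\Phi_{t_i}=\tY_{t_i}$, and that a constant increment does not affect the $p$-variation. Since $\|\tY\|_{p,[t_i,t_{i+1})}\le1$, Lemma~\ref{lem:Young contraction} contributes the increment $\|Y-\tY\|_{p,[t_i,t_{i+1})}$ multiplied by $\|A\|_{q,[t_i,t_{i+1})}$, which by the chosen smallness is absorbed into the left-hand side, leaving
\begin{equation*}
  \|Y-\tY\|_{p,[t_i,t_{i+1})} + \|K-\tK\|_{p,[t_i,t_{i+1})} \le C\big(|Y_{t_i}-\tY_{t_i}| + E_i\big),
\end{equation*}
where $E_i := \|A-\tA\|_{q,[t_i,t_{i+1})} + \|X-\tX\|_{p,[t_i,t_{i+1})} + \|L-\tL\|_{p,[t_i,t_{i+1})} + |L_{t_i}-\tL_{t_i}|$. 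Setting $a_i:=|Y_{t_i}-\tY_{t_i}|$ and bounding the jump at $t_{i+1}$ via~\eqref{eq:Delta K Young} (the positive-part map and $f\in C^2_b$ being Lipschitz) gives $|\Delta(Y-\tY)_{t_{i+1}}|\lesssim |Y_{t_{i+1}-}-\tY_{t_{i+1}-}| + |\Delta(A-\tA)_{t_{i+1}}| + |\Delta(X-\tX)_{t_{i+1}}| + |\Delta(L-\tL)_{t_{i+1}}|$, and hence a linear recursion $a_{i+1}\le(1+C)a_i + C(E_i+J_i)$, where $J_i$ collects the data-jump differences at $t_{i+1}$.

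Solving the recursion yields $a_i\le(1+C)^N\big(|y-\ty| + \sum_j(E_j+J_j)\big)$, and since $N=N(M)$ the prefactor is a constant depending only on $M$. Summing the local estimates by means of the subadditivity $\|\,\cdot\,\|_{p,[0,T]}\le\sum_i\|\,\cdot\,\|_{p,[t_i,t_{i+1}]}$ and Lemma~\ref{lem:p-variation have open interval} (to split each closed piece into its half-open part plus the jump), and then bounding $\sum_i E_i$ and $\sum_i J_i$ through superadditivity and the power-mean inequality (the factors $N^{1-1/q}$, $N^{1-1/p}$ and $N$ arising from Lemma~\ref{lem:p-variation estimates} and from $|L_{t_i}-\tL_{t_i}|\le|L_0-\tL_0|+\|L-\tL\|_{p,[0,T]}$), one arrives at the claimed bound with $C_{M,f}$ depending only on $M,\|f\|_{C^2_b},p,q$ and $n$. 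I expect the main obstacle to be the partition step: one must keep $N$ dependent only on $M$ while simultaneously ensuring $\|Y\|_p,\|\tY\|_p\le1$ on every piece, which works precisely because the smallness conditions concern superadditive controls of bounded total mass and~\eqref{eq:condition local existence} already confines the local solution to the unit ball; a secondary technical point is that the jumps at partition points must be handled directly via~\eqref{eq:Delta K Young} rather than through the contraction estimate.
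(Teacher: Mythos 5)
Your proposal is correct and follows essentially the same route as the paper's proof: a local estimate on half-open intervals chosen via superadditivity of the controls (so that the Young-integral contribution $\|Y-\tY\|_{p}\|A\|_{q}$ can be absorbed), explicit treatment of the jumps at partition points through~\eqref{eq:Delta K Young} combined with Lemma~\ref{lem:p-variation have open interval}, and a discrete Gr\"onwall/pasting argument with $N=N(M)$ intervals concluded by the $N^{(p-1)/p}$ power-mean estimate. The only cosmetic difference is your phrasing $\Phi_{t_i}=Y_{t_i}$, which should be read as the statement about the restarted input to the restricted Skorokhod problem on $[t_i,t_{i+1})$.
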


\begin{proof}
  \textit{Step~1: Local estimate for sufficiently small intervals.}
  We recall from the proof of Theorem~\ref{thm:existence uniqueness young case} that the unique solution of~\eqref{eq:reflected Young equation} satisfies $\|Y\|_{p,[s,t)} \leq 1$, whenever the interval $[s,t)$ is sufficiently small such that~\eqref{eq:condition local existence} holds for the data $(A,X,L)$. Thus, by Lemma~\ref{lem:Young contraction}, on any interval $[s,t)$ such that \eqref{eq:condition local existence} holds for both $(A,X,L)$ and $(\tA,\tX,\tL)$, we have that
  \begin{align*}
    &\bigg\|\int_0^\cdot f(Y_r)\dd A_r +X - \int_0^\cdot f(\tY_r)\dd \tA_r -\tX\bigg\|_{p,[s,t)}\\
    &\quad\lesssim \big(|Y_s - \tY_s| + \|Y - \tY\|_{p,[s,t)}\big)\|A\|_{q,[s,t)} + \|A - \tA\|_{q,[s,t)} + \|X - \tX\|_{p,[s,t)}.
  \end{align*}
  By the estimates in Theorem~\ref{thm:Skorokhod Lipschitz est}, we then have
  \begin{align*}
    &\|Y - \tY\|_{p,[s,t)}  + \|K - \tK\|_{p,[s,t)}\\
    &\quad\leq C \bigg(\big(|Y_s - \tY_s| + \|Y - \tY\|_{p,[s,t)}\big)\|A\|_{q,[s,t)} + \|A - \tA\|_{q,[s,t)}\\
   &\qquad \qquad + \|X - \tX\|_{p,[s,t)} + |Y_s - \tY_s| + \|L - \tL\|_{p,[s,t)} + |L_s - \tL_s|\bigg)
  \end{align*}
  for some constant~$C$ depending on $p, q$ and $n$. If we suppose that the interval $[s,t)$ is sufficiently small that
  \begin{equation}\label{eq:A suff small}
    C\|f\|_{C^2_b}\|A\|_{q,[s,t)} \leq \frac{1}{2}
  \end{equation}
  then, after rearranging, we obtain
  \begin{align}\label{eq:short time Young soln est}
    \begin{split}
    &\|Y - \tY\|_{p,[s,t)} + \|K - \tK\|_{p,[s,t)}\\
    &\quad \lesssim \|A - \tA\|_{q,[s,t)} + |Y_s - \tY_s|+ \|X - \tX\|_{p,[s,t)} + |L_s - \tL_s| + \|L - \tL\|_{p,[s,t)}.
    \end{split}    
  \end{align}

  \textit{Step~2: Estimating the ``big'' jumps.}
  We estimate
  \begin{align}\label{eq:est Delta K}
  \begin{split}
    &|\Delta K_{t} - \Delta \tK_{t}| \\
    &\quad= \big|\big[L_{t} - Y_{t-} - f(Y_{t-})\Delta A_{t} - \Delta X_{t}\big]^+ - \big[\tL_{t} - \tY_{t-} - f(\tY_{t-})\Delta \tA_{t_i} - \Delta \tX_{t}\big]^+\big|\\
    &\quad\lesssim |L_{t} - \tL_{t}| + |Y_{t-} - \tY_{t-}| + |\Delta A_{t} - \Delta \tA_{t}| + |\Delta X_{t} - \Delta \tX_{t}|,
  \end{split}    
  \end{align}
  where the multiplicative constant indicated by the symbol $\lesssim$ depends on $\|f\|_{C^1_b}$ and on the bound $M$. Moreover,
  \begin{align}
    |\Delta Y_{t} - \Delta \tY_{t}| &= |f(Y_{t-})\Delta A_{t} + \Delta X_{t} + \Delta K_{t} - f(\tY_{t-})\Delta \tA_{t} - \Delta \tX_{t} - \Delta \tK_{t}|\nonumber\\
    &\lesssim |Y_{t-} - \tY_{t-}| + |\Delta A_{t} - \Delta \tA_{t}| + |\Delta X_{t} - \Delta \tX_{t}| + |\Delta K_{t} - \Delta \tK_{t}|\label{eq:est Delta Y}\\
    &\lesssim |Y_{t-} - \tY_{t-}| + |\Delta A_{t} - \Delta \tA_{t}| + |\Delta X_{t} - \Delta \tX_{t}| + |L_{t} - \tL_{t}|.\nonumber
  \end{align}
  Combining~\eqref{eq:est Delta K} and~\eqref{eq:est Delta Y}, we obtain
  \begin{align}
    &|\Delta Y_{t} - \Delta \tY_{t}| + |\Delta K_{t} - \Delta \tK_{t}|\nonumber\\
    &\quad \lesssim |Y_{t-} - \tY_{t-}| + |\Delta A_{t} - \Delta \tA_{t}| + |\Delta X_{t} - \Delta \tX_{t}| + |L_{t} - \tL_{t}|\label{eq:est Delta Y and Delta K}\\
    &\quad \leq |Y_s - \tY_s| + \|Y - \tY\|_{p,[s,t)} + |\Delta A_{t} - \Delta \tA_{t}| + |\Delta X_{t} - \Delta \tX_{t}| + |L_{t} - \tL_{t}|.\nonumber
  \end{align}
  By the second inequality in Lemma~\ref{lem:p-variation have open interval} we have
  \begin{align*}  
    &\|Y - \tY\|_{p,[s,t]} + \|K - \tK\|_{p,[s,t]}\\
    &\quad \leq \|Y - \tY\|_{p,[s,t)} + \|K - \tK\|_{p,[s,t)} + |\Delta Y_{t} - \Delta \tY_{t}| + |\Delta K_{t} - \Delta \tK_{t}|.
  \end{align*}
  Combining the estimates~\eqref{eq:short time Young soln est} and~\eqref{eq:est Delta Y and Delta K} and substituting into the above, we obtain
  \begin{align*}
    &\|Y - \tY\|_{p,[s,t]} + \|K - \tK\|_{p,[s,t]}\\
    &\quad \lesssim |Y_s - \tY_s| + \|A - \tA\|_{q,[s,t)} + \|X - \tX\|_{p,[s,t)} + |L_s - \tL_s| + \|L - \tL\|_{p,[s,t)}\\ 
    &\qquad + |\Delta A_{t} - \Delta \tA_{t}| + |\Delta X_{t} - \Delta \tX_{t}| + |L_{t}  - \tL_{t}|.
  \end{align*}
  By the first inequality in Lemma~\ref{lem:p-variation have open interval}, we deduce that
  \begin{align}\label{eq:est Y and K on s,t}
    \begin{split}
    &\|Y - \tY\|_{p,[s,t]}^p + \|K - \tK\|_{p,[s,t]}^p\\
    &\quad \lesssim |Y_s - \tY_s|^p + \|A - \tA\|_{q,[s,t]}^p + \|X - \tX\|_{p,[s,t]}^p + |L_s - \tL_s|^p + \|L - \tL\|_{p,[s,t]}^p.
    \end{split}
  \end{align}

  \textit{Step~3: Global estimate.} So far we have shown that the estimate~\eqref{eq:est Y and K on s,t} holds for every pair of times $s < t$ such that the conditions~\eqref{eq:condition local existence} and~\eqref{eq:A suff small} hold.

  Since the functions $(s,t) \mapsto \|A\|_{q,[s,t]}^q$, $(s,t) \mapsto \|X\|_{p,[s,t]}^p$, $(s,t) \mapsto \|L\|_{p,[s,t]}^p$ (and similarly for $\tA, \tX, \tL$) are superadditive and bounded by $M^q \vee M^p$, there exists a partition $\cP = \{0 = t_0 < \dots < t_N = T\}$, where the number of intervals $N$ depends only on $M, \|f\|_{C^2_b}, p, q$ and $n$, such that~\eqref{eq:condition local existence} and~\eqref{eq:A suff small} hold on each interval $[t_i,t_{i+1})$ for $i = 0,\ldots,N-1$. Thus, for each $i = 0,\ldots,N-1$, we have
  \begin{align*}
    &\|Y - \tY\|_{p,[t_i,t_{i+1}]}^p + \|K - \tK\|_{p,[t_i,t_{i+1}]}^p\\
    &\quad \lesssim |Y_{t_i} - \tY_{t_i}|^p + \|A - \tA\|_{q,[t_i,t_{i+1}]}^p + \|X - \tX\|_{p,[t_i,t_{i+1}]}^p + |L_{t_i} - \tL_{t_i}|^p + \|L - \tL\|_{p,[t_i,t_{i+1}]}^p.
  \end{align*}
  Writing $|Y_{t_i} - \tY_{t_i}| \leq |Y_{t_{i-1}} - \tY_{t_{i-1}}| + \|Y - \tY\|_{[t_{i-1},t_{i}]}$ and similarly for $L - \tL$, and pasting the estimate~\eqref{eq:est Y and K on s,t} on different intervals together, we see that
  \begin{align*}
    &\|Y - \tY\|_{p,[t_i,t_{i+1}]}^p + \|K - \tK\|_{p,[t_i,t_{i+1}]}^p\\  
    &\quad \lesssim |Y_0 - \tY_0|^p + |L_0 - \tL_0|^p + \sum_{j=0}^i \Big(\|A - \tA\|_{q,[t_j,t_{j+1}]}^p + \|X - \tX\|_{p,[t_j,t_{j+1}]}^p + \|L - \tL\|_{p,[t_j,t_{j+1}]}^p\Big).
  \end{align*} 
  We recall the standard estimate
  \begin{equation*}
    \|Y - \tY\|_{p,[0,T]} \leq N^{\frac{p - 1}{p}} \bigg(\sum_{i=0}^{N-1} \|Y - \tY\|_{p,[t_{i},t_{i+1}]}^p\bigg)^{\frac{1}{p}}
  \end{equation*}
  which holds similarly for $K - \tK$. Putting this together, and recalling that the number of partitions $N$ depends only on $M, \|f\|_{C^2_b}, p, q$ and $n$, the desired result follows.
\end{proof}

In probability theory one often likes to work with a variety of different distances on the Skorokhod space $D([0,T];\R^n)$. Following~\cite[Section~5.1]{Friz2018}, we can immediately reformulate the stability result (Proposition~\ref{prop:stability}) in terms of a Skorokhod $J_1$ $p$-variation distance. To this end, we let $\Lambda$ be the set of all time-changes, that is, increasing bijective functions $\lambda \colon [0,T] \to [0,T]$, and write $\|\lambda\| := \sup_{t\in[0,T]}|\lambda(t)-t|$ for $\lambda \in \Lambda$.

We define two Skorokhod $J_1$ $p$-variation distances, namely
\begin{equation*}
  \sigma_{p,[0,T]}((Y,K),(\tY,\tK))
  := \inf_{\lambda \in \Lambda} \Big(\|\lambda\| \vee (\|Y \circ \lambda - \tY\|_{p,[0,T]} + \|K \circ \lambda - \tK\|_{p,[0,T]})\Big)
\end{equation*}
and
\begin{align*}
  &\hat{\sigma}_{p,q,[0,T]}((A,X,L),(\tA,\tX,\tL))\\  
  &\quad:= \inf_{\lambda \in \Lambda} \Big(\|\lambda\| \vee (\|A \circ \lambda - \tA\|_{q,[0,T]} + \|X \circ \lambda - \tX\|_{p,[0,T]} + \|L \circ \lambda - \tL\|_{p,[0,T]} )\Big).
\end{align*}

\begin{corollary}
  Let $f \in C_b^2$, $q \in [1,2)$ and $p \in [q, \infty)$ such that $1/p + 1/q > 1$. Let $y, \ty \in \R^n$, $A, \tA \in D^q([0,T];\R^d)$, $X, \tX, L, \tL \in D^p([0,T];\R^n)$ such that $y \geq L_0$ and $\ty \geq \tL_0$. Let $(Y,K)$ and $(\tY,\tK)$ be the unique solutions of the reflected Young differential equation~\eqref{eq:reflected Young equation} corresponding to the data $(y,A,X,L)$ and $(\ty,\tA,\tX,\tL)$, respectively. Suppose that the norms $\|A\|_{q,[0,T]}$, $\|\tA\|_{q,[0,T]}$, $\|X\|_{p,[0,T]}$, $\|\tX\|_{p,[0,T]}$, $\|L\|_{p,[0,T]}$ and $\|\tL\|_{p,[0,T]}$ are all bounded by a given constant $M > 0$. Then
  \begin{equation*}
    \sigma_{p,[0,T]} ((Y,K),(\tilde{Y},\tilde{K})) \leq C_{M,f}\Big(|y - \ty| + |L_0 - \tL_0| + \hat{\sigma}_{p,q,[0,T]} ((A,X,L),(\tA,\tX,\tL))\Big)
  \end{equation*}
  for some constant $C_{M,f}$ depending on $M, \|f\|_{C^2_b}, p, q$ and $n$.
\end{corollary}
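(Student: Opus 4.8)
The plan is to exploit the reparametrization invariance of the solution map, thereby reducing the Skorokhod $J_1$ estimate to the fixed-time estimate already established in Proposition~\ref{prop:stability}. The crucial observation is that if $(Y,K)$ is the unique solution of~\eqref{eq:reflected Young equation} for the data $(y,A,X,L)$, then for any time-change $\lambda \in \Lambda$ the pair $(Y \circ \lambda, K \circ \lambda)$ is the unique solution for the reparametrized data $(y, A \circ \lambda, X \circ \lambda, L \circ \lambda)$. First I would verify this: composing~\eqref{eq:reflected Young equation} with $\lambda$ and invoking the reparametrization invariance of the Young integral, namely $\int_0^{\lambda(t)} f(Y_s)\dd A_s = \int_0^t f(Y_{\lambda(r)})\dd (A\circ\lambda)_r$ (which follows from the left-point Riemann-sum definition, as $\lambda$ merely relabels the partition points), shows that $Y \circ \lambda$ satisfies the differential equation driven by the time-changed signals. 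The reflection conditions (a) and (b) are likewise preserved: since $\lambda(0)=0$ we have $(K\circ\lambda)_0 = 0$ and $(L\circ\lambda)_0 = L_0$, the path $K^i \circ \lambda$ stays non-decreasing because $\lambda$ is increasing, and the minimality condition~\eqref{eq:K is minimal} transforms correctly under the substitution $u = \lambda(s)$ for Lebesgue--Stieltjes integrals. Uniqueness from Theorem~\ref{thm:existence uniqueness young case} then identifies $(Y\circ\lambda, K\circ\lambda)$ as the solution for the reparametrized data.

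Next I would fix $\epsilon > 0$ and choose a near-optimal time-change $\lambda \in \Lambda$ realizing $\hat{\sigma}_{p,q,[0,T]}$ up to $\epsilon$, so that both $\|\lambda\|$ and $\|A\circ\lambda - \tA\|_{q,[0,T]} + \|X\circ\lambda - \tX\|_{p,[0,T]} + \|L\circ\lambda - \tL\|_{p,[0,T]}$ are at most $\hat{\sigma}_{p,q,[0,T]} + \epsilon$. Since the $p$-variation seminorm is invariant under reparametrization, $\|A\circ\lambda\|_{q,[0,T]} = \|A\|_{q,[0,T]}$ and similarly for $X\circ\lambda$ and $L\circ\lambda$, so all six norms remain bounded by $M$ and Proposition~\ref{prop:stability} applies to the two data sets $(y, A\circ\lambda, X\circ\lambda, L\circ\lambda)$ and $(\ty,\tA,\tX,\tL)$, whose solutions are $(Y\circ\lambda, K\circ\lambda)$ and $(\tY,\tK)$. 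Using $(L\circ\lambda)_0 = L_0$, this yields
\begin{align*}
  &\|Y\circ\lambda - \tY\|_{p,[0,T]} + \|K\circ\lambda - \tK\|_{p,[0,T]}\\
  &\quad\leq C_{M,f}\Big(|y-\ty| + |L_0 - \tL_0| + \|A\circ\lambda - \tA\|_{q,[0,T]} + \|X\circ\lambda - \tX\|_{p,[0,T]} + \|L\circ\lambda - \tL\|_{p,[0,T]}\Big).
\end{align*}

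Finally I would feed this into the definition of $\sigma_{p,[0,T]}$. Bounding the maximum there by a sum via $a \vee b \leq a + b$, and using $\|\lambda\| \leq \hat{\sigma}_{p,q,[0,T]} + \epsilon$ together with the estimate above (whose last three terms are also bounded by $\hat{\sigma}_{p,q,[0,T]} + \epsilon$), gives, after absorbing constants,
\begin{equation*}
  \sigma_{p,[0,T]}((Y,K),(\tY,\tK)) \leq C_{M,f}\Big(|y-\ty| + |L_0-\tL_0| + \hat{\sigma}_{p,q,[0,T]} + \epsilon\Big),
\end{equation*}
and letting $\epsilon \downarrow 0$ yields the claim. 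The only genuine content lies in the reparametrization invariance of the solution map established in the first step; everything afterwards is bookkeeping. I expect the main (though modest) obstacle to be the careful justification of the change-of-variables identities for the Young and Lebesgue--Stieltjes integrals in the \cadlag setting, where one must track the left-continuity of the evaluation points at jumps to ensure that the left-point Riemann sums match on both sides of the identity.
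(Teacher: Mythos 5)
Your proposal is correct and follows essentially the same route as the paper: pick a near-optimal time-change $\lambda$, note that $(Y\circ\lambda,K\circ\lambda)$ solves the equation for the reparametrized data by invariance of $p$-variation norms under time-changes, apply Proposition~\ref{prop:stability}, and let $\epsilon\downarrow 0$. The extra care you devote to justifying the change of variables for the Young and Lebesgue--Stieltjes integrals is a reasonable elaboration of what the paper treats as ``straightforward to observe,'' but it does not change the argument.
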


\begin{proof}
  Let $\epsilon > 0$. By the definition of the Skorokhod distance, there exists a $\lambda \in \Lambda$ such that
  \begin{align*}
    &\|\lambda\| \vee \Big(\|A \circ \lambda - \tA\|_{q,[0,T]} + \|X \circ \lambda - \tX\|_{p,[0,T]} + \|L\circ \lambda - \tilde{L} \|_{p,[0,T]}\Big)\\
    &\quad< \hat{\sigma}_{p,q,[0,T]} ((A,X,L),(\tA,\tX,\tL)) + \epsilon.
  \end{align*}
  Since $p$-variation norms are invariant under time-changes, it is straightforward to observe that $(Y \circ \lambda, K \circ \lambda)$ is the unique solution of the reflected Young differential equation~\eqref{eq:reflected Young equation} with data $(y,A \circ \lambda,X \circ \lambda,L \circ \lambda)$. Hence, by Proposition~\ref{prop:stability}, we have that
  \begin{align*}
    &\sigma_{p,[0,T]} ((Y,K),(\tilde{Y},\tilde{K})) \leq \|\lambda\| + \|Y \circ \lambda - \tY\|_{p,[0,T]} + \|K \circ \lambda - \tK\|_{p,[0,T]}\\
    &\quad\lesssim \|\lambda\| + |y - \ty| + \|A \circ \lambda - \tA\|_{q,[0,T]} + \|X \circ \lambda - \tX\|_{p,[0,T]} + |L_0 - \tL_0| + \|L \circ \lambda - \tL\|_{p,[0,T]}\\ 
    &\quad\lesssim |y - \ty| + |L_0 - \tL_0| + \hat{\sigma}_{p,q,[0,T]} ((A,X,L),(\tA,\tX,\tL)) + \epsilon.
  \end{align*}
  Letting $\epsilon \to 0$, we obtain the result.
\end{proof}

\section{Reflected RDEs -- Existence}\label{sec:rough existence}

In order to develop a pathwise theory for reflected differential equations covering stochastic differential equations driven by, e.g.~L{\'e}vy processes or martingales, Young integration is in general not sufficient. To treat such processes one needs to significantly extend the theory of Young integration to be able to deal with paths of lower regularity. One such extension is given by the theory of rough paths initiated by Lyons~\cite{Lyons1998}. In the next subsection we recall the notion of integration with respect to c{\`a}dl{\`a}g rough paths, following the works of Friz--Shekhar~\cite{Friz2017} and Friz--Zhang~\cite{Friz2018}.

\subsection{C{\`a}dl{\`a}g rough paths}

Let $\Delta_T := \{(s,t) \in [0,T]^2\,:\, s \leq t\}$ be the standard $2$-simplex. For a two-parameter function $\mathbb{X} \colon \Delta_T \to \R^{d \times d}$ we define
\begin{equation*}
  \|\mathbb{X}\|_{\frac{p}{2},[s,t]} := \bigg(\sup_{\cP \subset [s,t]} \sum_{[u,v] \in \mathcal{P}} |\mathbb{X}_{u,v}|^{\frac{p}{2}} \bigg)^{\frac{2}{p}}
\end{equation*}
where the supremum is taken over all partitions of the interval $[s,t]$. We write $D^{\frac{p}{2}}(\Delta_T;\R^d)$ for the space of all functions $\mathbb{X} \colon \Delta_T \to \R^{d \times d}$ which satisfy $\|\bbX\|_{\frac{p}{2},[0,T]} < \infty$ and such that the maps $s \mapsto \bbX_{s,t}$ for fixed~$t$, and $t \mapsto \bbX_{s,t}$ for fixed~$s$, are both c{\`a}dl{\`a}g. Moreover, we set
\begin{equation*}
  \Delta \bbX_t := \bbX_{t-,t} = \lim_{s \to t,\, s < t} \bbX_{s,t} \qquad \text{for} \quad t \in (0,T].
\end{equation*}

\medskip

The fundamental definition of a c{\`a}dl{\`a}g rough path was first introduced in \cite[Definition~12]{Friz2017}. For $p\in [2,3)$ a pair $\X = (X, \mathbb{X})$ is called a \textit{c{\`a}dl{\`a}g $p$-rough path} over $\R^d$ if
\begin{enumerate}[(i)]
  \item $X \in D^p([0,T];\R^d)$ and $\bbX \in D^{\frac{p}{2}}(\Delta_T;\R^d)$,
  \item $\bbX_{s,t} - \bbX_{s,u} - \bbX_{u,t} = X_{s,u} \otimes X_{u,t}$ for $ 0 \leq s \leq u \leq t \leq T$.
\end{enumerate}
In component form, condition (ii) states that $\bbX^{ij}_{s,t} - \bbX^{ij}_{s,u} - \bbX^{ij}_{u,t} = X^i_{s,u} X^j_{u,t}$ for every $i$ and $j$.

We denote the space of c{\`a}dl{\`a}g $p$-rough paths by $\mathcal{D}^p([0,T];\R^d)$. On the space $\mathcal{D}^p([0,T];\R^d)$ we use the natural seminorm
\begin{equation*}
  \ver{\X}_{p,[s,t]} := \|X\|_{p,[s,t]} + \|\bbX\|_{\frac{p}{2},[s,t]}, \qquad \text{for} \quad (s,t) \in \Delta_T,
\end{equation*}
and distance
\begin{equation*}
  \|\bX;\tbX\|_{p,[s,t]} := \|X - \tX\|_{p,[s,t]} + \|\bbX - \tbbX\|_{\frac{p}{2},[s,t]}, \qquad \text{for} \quad (s,t) \in \Delta_T,
\end{equation*}
whenever $\bX = (X,\bbX), \tbX = (\tX,\tbbX) \in \mathcal{D}^p([0,T];\R^n)$.

Suppose that $\X = (X,\mathbb{X}) \in \mathcal{D}^{p}([0,T];\R^d)$ is a c{\`a}dl{\`a}g $p$-rough path for $p \in [2,3)$. A pair $(Y,Y')$ is called a \textit{controlled path} with respect to $X$ if
\begin{equation*}
  Y \in D^p([0,T];\R^n), \quad Y' \in D^p([0,T];\cL(\R^d;\R^n)) \quad \text{and} \quad R^Y \in D^{\frac{p}{2}}(\Delta_T;\R^n),
\end{equation*}
where $R^Y$ is defined by
\begin{equation*}
  R^Y_{s,t} = Y_{s,t} - Y'_{s} X_{s,t} \qquad \text{for} \quad (s,t) \in \Delta_T.
\end{equation*}
The space of controlled paths is denoted by $\mathcal{V}_{X}^{p}([0,T];\R^n)$, and $Y'$ is called Gubinelli derivative of $Y$ (with respect to $X$). For two controlled paths $(Y,Y')\in \mathcal{V}^p_X([0,T];\R^n)$ and $(\tilde{Y},\tilde{Y}')\in \mathcal{V}^p_{\tilde{X}}([0,T];\R^n)$ we introduce 
\begin{equation*}
  \|Y,Y'\|_{p,[s,t]}:=|Y_s|+|Y'_s|+\|Y'\|_{p,[s,t]}+\|R^Y\|_{\frac{p}{2},[s,t]}
\end{equation*}
and 
\begin{equation*}
  d_{X,\tilde{X},p,[s,t]}(Y,Y';\tilde{Y},\tilde{Y}') := \|Y'-\tilde{Y}' \|_{p,[s,t]} + \|R^Y - R^{\tilde{Y}}\|_{\frac{p}{2},[s,t]},
\end{equation*}
for $(s,t) \in \Delta_T$. The linear space $\mathcal{V}_X^p([0,T];\R^n)$ of controlled paths equipped with the norm $\|\hspace{0.5pt}\cdot ,\cdot\|_{p,[0,T]}$ is a compete metric space, cf.~\cite[Section~3.2]{Friz2018}.

\medskip

Given $p \in [2,3)$, $\X = (X,\mathbb{X}) \in \mathcal{D}^{p}([0,T];\R^d)$ and $(Y,Y') \in \mathcal{V}_{X}^{p}([0,T];\R^n)$, the rough path integral
\begin{equation}\label{eq:rough path integral}
  \int_s^t Y_r \dd \X_r :=  \lim_{|\mathcal{P}([s,t])| \to 0} \sum_{[u,v]\in\mathcal{P}([s,t])} Y_u X_{u,v} + Y'_u \mathbb{X}_{u,v}, \qquad (s,t) \in \Delta_T,
\end{equation}
exists (in the classical mesh Riemann--Stieltjes sense), and comes with the estimate
\begin{equation*}
  \bigg|\int_s^t Y_r \dd \X_r - Y_s X_{s,t} - Y'_s \mathbb{X}_{s,t}\bigg| \leq C\Big(\|R^Y\|_{\frac{p}{2},[s,t)} \|X\|_{p,[s,t]} + \|Y'\|_{p,[s,t)} \|\mathbb{X}\|_{\frac{p}{2},[s,t]}\Big)
\end{equation*}
for some constant $C$ depending only on $p$; see \cite[Proposition~2.6]{Friz2018}. As already mentioned for Young integration with respect to c{\`a}dl{\`a}g  paths, it is crucial to take left-point Riemann sums in the definition of the c{\`a}dl{\`a}g  rough path integral~\eqref{eq:rough path integral}. Moreover, let us remark that the rough path integral $t \mapsto \int_0^t Y_r\dd \bX_r$ is again a c{\`a}dl{\`a}g path and its jump at time $t \in (0,T]$ is given by
\begin{equation*}
  \Delta \bigg(\int_0^\cdot Y_r\dd \bX_r \bigg)_t = Y_{t-}\Delta X_t + Y'_{t-} \Delta \bbX_t,
\end{equation*}
see \cite[Lemma~2.9]{Friz2018}.

\begin{lemma}\label{lem:bound rough integral}
  Let $f \in C^3_b$. Let $\bX = (X,\bbX) \in \mathcal{D}^p([0,T];\R^d)$ be a c{\`a}dl{\`a}g $p$-rough path for some $p \in [2,3)$, and suppose that $(Y,Y') \in \mathcal{V}^p_X([0,T];\R^n)$ is a controlled path such that $|Y'_0| + \|Y'\|_{p,[0,T]} + \|R^Y\|_{\frac{p}{2},[0,T]} \leq M$ for some $M > 0$. Then, $(f(Y),\D f (Y) Y')$ is a controlled path, and
  \begin{equation*}
    \bigg\|\int_0^\cdot f(Y_r) \dd \bX_r\bigg\|_{p,[0,T]} \leq C_{M,f,p} (1 + \|X\|_{p,[0,T]}^2)\ver{\bX}_{p,[0,T]}.
  \end{equation*}
\end{lemma}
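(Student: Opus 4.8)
The plan is to first show that $(f(Y),\D f(Y)Y')$ is a controlled path, and then to feed this controlled path into the rough integral estimate recalled just before the statement, estimating the $p$-variation of $I_t := \int_0^t f(Y_r)\dd\bX_r$ increment by increment over an arbitrary partition.

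First I would establish the controlled path property. A second-order Taylor expansion of $f$ gives $f(Y_t)-f(Y_s) = \D f(Y_s)Y_{s,t} + O(|Y_{s,t}|^2)$, and substituting $Y_{s,t} = Y'_s X_{s,t} + R^Y_{s,t}$ yields
\[
  R^{f(Y)}_{s,t} = f(Y)_{s,t} - \D f(Y_s)Y'_s X_{s,t} = \D f(Y_s)R^Y_{s,t} + O(|Y_{s,t}|^2).
\]
Since $f\in C^3_b$, this is controlled by $\|\D f\|_\infty\|R^Y\|_{p/2}$ and $\|\D^2 f\|_\infty\|Y\|_p^2$, so $R^{f(Y)}\in D^{p/2}(\Delta_T;\R^n)$; together with $f(Y)\in D^p$ and $\D f(Y)Y'\in D^p$ (the latter by the product and chain rules for $p$-variation) this shows $(f(Y),\D f(Y)Y')\in\mathcal{V}^p_X([0,T];\R^n)$. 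This is the composition result underlying \cite{Friz2018}, so I would either cite it or spell out the Taylor argument.

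Next I would quantify every norm in terms of $M$ and $\|X\|_p$. By the second inequality of Lemma~\ref{lem:p-variation estimates}, $\|Y'\|_\infty\leq|Y'_0|+\|Y'\|_p\leq M$, and using $\|R^Y\|_p\leq\|R^Y\|_{p/2}\leq M$ (the $\ell^p$-embedding as in Lemma~\ref{lem:p-variation estimates}, applied to the two-parameter $R^Y$) gives
\[
  \|Y\|_{p,[0,T]} \leq \|Y'\|_\infty\|X\|_{p,[0,T]} + \|R^Y\|_{p,[0,T]} \leq M\bigl(1 + \|X\|_{p,[0,T]}\bigr).
\]
Plugging this into the bounds from the first step yields $\|\D f(Y)Y'\|_p\leq C_{M,f}(1+\|X\|_p)$ and $\|R^{f(Y)}\|_{p/2}\leq C_{M,f}(1+\|X\|_p^2)$, the square arising precisely from the $\|Y\|_p^2$ term in $R^{f(Y)}$.

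Finally I would estimate $\|I\|_{p,[0,T]}$. The rough integral estimate gives, on each $[u,v]$, the decomposition $I_{u,v} = f(Y_u)X_{u,v} + \D f(Y_u)Y'_u\,\bbX_{u,v} + Q_{u,v}$ with $|Q_{u,v}|\leq C(\|R^{f(Y)}\|_{p/2,[u,v)}\|X\|_{p,[u,v]} + \|\D f(Y)Y'\|_{p,[u,v)}\|\bbX\|_{p/2,[u,v]})$. Summing $|I_{u_i,u_{i+1}}|^p$ over a partition and applying Minkowski splits the bound into three parts: the two leading terms are controlled by $\|f\|_\infty\|X\|_p$ and $\|\D f\|_\infty\|Y'\|_\infty\|\bbX\|_{p/2}$, where for the latter I use $\sum_i|\bbX_{u_i,u_{i+1}}|^p \leq \|\bbX\|_{p/2}^{p/2}\sum_i|\bbX_{u_i,u_{i+1}}|^{p/2}\leq\|\bbX\|_{p/2}^p$; the remainder part is summed via superadditivity of $\|X\|_p^p$ and $\|\bbX\|_{p/2}^{p/2}$ to give $\lesssim\|R^{f(Y)}\|_{p/2}\|X\|_p + \|\D f(Y)Y'\|_p\|\bbX\|_{p/2}$. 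Inserting the norm bounds from the previous paragraph and absorbing $1+\|X\|_p$ into $1+\|X\|_p^2$ collects everything under the single factor $(1+\|X\|_p^2)\ver{\bX}_{p,[0,T]}$, giving the claim. The main obstacle I anticipate is exactly this last bookkeeping: tracking that $\bbX$ enters the $p$-variation of $I$ only through $\|\bbX\|_{p/2}$ rather than through a genuine $p$-variation norm, and confirming that all mixed terms fit under one copy of $(1+\|X\|_p^2)\ver{\bX}_{p,[0,T]}$, with the quadratic factor traced back to $\|R^{f(Y)}\|_{p/2}$.
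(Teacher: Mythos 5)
Your argument is correct and follows essentially the same route as the paper: the paper simply writes $\int_0^\cdot f(Y_r)\dd\bX_r$ as a controlled path with Gubinelli derivative $f(Y)$, bounds its $p$-variation by $\|X\|_{p,[0,T]}$ plus the $\tfrac{p}{2}$-variation of its remainder, and cites \cite[Lemmas~3.5 and~3.6]{Friz2018} for the two facts you prove by hand (the composition $(f(Y),\D f(Y)Y')$ being controlled, and the remainder bound $\lesssim (1+\|X\|_{p,[0,T]}^2)\ver{\bX}_{p,[0,T]}$). Your explicit Taylor expansion, the norm bookkeeping $\|Y\|_{p}\leq M(1+\|X\|_{p})$, $\|R^{f(Y)}\|_{\frac{p}{2}}\lesssim 1+\|X\|_{p}^2$, and the partition-summation with superadditivity are exactly the content of those cited lemmas, so the two proofs coincide in substance.
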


\begin{proof}
  By \cite[Lemma~3.5]{Friz2018}, we have $(f(Y),\D f (Y) Y') \in \mathcal{V}^p_X([0,T];\cL(\R^d;\R^n))$ for $f \in C^3_b$.
  
  Since
  \begin{equation*}
    \bigg|\int_s^t f(Y_r) \dd \bX_r\bigg| = |f(Y_s)X_{s,t} + R^{\int_0^\cdot f(Y_r) \dd \bX_r}_{s,t}| \lesssim |X_{s,t}| + \bigg|R^{\int_0^\cdot f(Y_r) \dd \bX_r}_{s,t}\bigg|,
  \end{equation*}
  it follows that
  \begin{align*}
    \bigg\|\int_0^\cdot f(Y_r) \dd \bX_r\bigg\|_{p,[0,T]} 
    &\lesssim \|X\|_{p,[0,T]} + \bigg\|R^{\int_0^\cdot f(Y_r) \dd \bX_r}\bigg\|_{\frac{p}{2},[0,T]}\\
    &\lesssim (1 + \|X\|_{p,[0,T]}^2)\ver{\bX}_{p,[0,T]},
  \end{align*}
  where in the last line we used \cite[Lemma~3.6]{Friz2018}.
\end{proof}

\begin{lemma}\label{lem:contraction rough integrals}
  Let $f \in C^3_b$. Let $\bX = (X,\bbX), \tbX = (\tX,\tbbX) \in \mathcal{D}^p([0,T];\R^d)$ be two \cadlag $p$-rough paths for some $p \in [2,3)$, and let $(Y,Y') \in \mathcal{V}^p_X([0,T];\R^n)$ and $(\tY,\tY') \in \cV^p_{\tX}([0,T];\R^n)$ be controlled paths. Suppose that $\ver{\bX}_{p,[0,T]} \leq M$, $\ver{\tbX}_{p,[0,T]} \leq M$,
  \begin{equation*}
    |Y'_0| + \|Y'\|_{p,[0,T]} + \|R^Y\|_{\frac{p}{2},[0,T]} \leq M
    \quad \text{and}\quad
    |\tY'_0| + \|\tY'\|_{p,[0,T]} + \|R^{\tY}\|_{\frac{p}{2},[0,T]} \leq M,
  \end{equation*}
  for some $M > 0$. Then
  \begin{align*}
    &\bigg\|\int_0^\cdot f(Y_r) \dd \bX_r - \int_0^\cdot f(\tY_r) \dd \tbX_r\bigg\|_{p,[0,T]}\\
    &\quad\leq C_{M,f,p}\Big(|Y_0 - \tY_0| + |Y'_0 - \tY'_0| + \|Y' - \tY'\|_{p,[0,T]} + \|R^Y - R^{\tY}\|_{\frac{p}{2},[0,T]} + \|\bX;\tbX\|_{p,[0,T]}\Big).
  \end{align*}
\end{lemma}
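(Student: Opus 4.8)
The plan is to mirror the proof of Lemma~\ref{lem:bound rough integral}, but now tracking differences throughout. Write $Z := f(Y)$, $Z' := \D f(Y)Y'$ and $\tZ := f(\tY)$, $\tZ' := \D f(\tY)\tY'$. By \cite[Lemma~3.5]{Friz2018} both $(Z,Z') \in \crp([0,T];\cL(\R^d;\R^n))$ and $(\tZ,\tZ') \in \crpt([0,T];\cL(\R^d;\R^n))$ are controlled paths, with controlled-path norms bounded by a constant depending only on $M$, $f$ and $p$ thanks to the hypotheses. Recalling that $\int_0^\cdot Z\dd\bX$ is itself a controlled path with Gubinelli derivative $Z$, so that $\int_s^t Z\dd\bX = Z_s X_{s,t} + R^{\int_0^\cdot Z\dd\bX}_{s,t}$, and likewise for the tilded integral, I would decompose the increment of the difference as
\begin{equation*}
  \int_s^t Z\dd\bX - \int_s^t\tZ\dd\tbX = \big(Z_s X_{s,t} - \tZ_s\tX_{s,t}\big) + \big(R^{\int_0^\cdot Z\dd\bX}_{s,t} - R^{\int_0^\cdot\tZ\dd\tbX}_{s,t}\big).
\end{equation*}
Applying Minkowski's inequality partition-wise, and noting that since $p \geq \frac{p}{2}$ the $p$-variation of the remainder is dominated by its $\frac{p}{2}$-variation (as in the first inequality of Lemma~\ref{lem:p-variation estimates}), this reduces the bound on $\big\|\int_0^\cdot Z\dd\bX - \int_0^\cdot\tZ\dd\tbX\big\|_{p,[0,T]}$ to controlling the three quantities $\|Z - \tZ\|_\infty\|X\|_{p,[0,T]}$, $\|\tZ\|_\infty\|X - \tX\|_{p,[0,T]}$ and $\big\|R^{\int_0^\cdot Z\dd\bX} - R^{\int_0^\cdot\tZ\dd\tbX}\big\|_{\frac{p}{2},[0,T]}$.

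The first two (first-order) terms are routine. Since $f \in C^3_b$, we have $\|\tZ\|_\infty = \|f(\tY)\|_\infty \leq \|f\|_{C^1_b}$, while $\|Z - \tZ\|_\infty = \|f(Y) - f(\tY)\|_\infty \lesssim \|f\|_{C^1_b}\|Y - \tY\|_\infty$. The sup-norm difference $\|Y - \tY\|_\infty$ is in turn controlled by the controlled-path data: using $Y_{s,t} = Y'_s X_{s,t} + R^Y_{s,t}$, the second inequality of Lemma~\ref{lem:p-variation estimates}, and the uniform bounds $M$, one obtains $\|Y - \tY\|_\infty \lesssim C_{M}\big(|Y_0 - \tY_0| + |Y'_0 - \tY'_0| + \|Y' - \tY'\|_{p,[0,T]} + \|R^Y - R^{\tY}\|_{\frac{p}{2},[0,T]} + \|X - \tX\|_{p,[0,T]}\big)$, which is exactly the right-hand side of the asserted estimate.

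The main work lies in the remainder difference $\big\|R^{\int_0^\cdot Z\dd\bX} - R^{\int_0^\cdot\tZ\dd\tbX}\big\|_{\frac{p}{2},[0,T]}$, the difference-version of \cite[Lemma~3.6]{Friz2018}. Using the rough-integral estimate displayed before Lemma~\ref{lem:bound rough integral}, I would first split each remainder as $R^{\int_0^\cdot Z\dd\bX}_{s,t} = Z'_s\bbX_{s,t} + Q^Z_{s,t}$, where $Q^Z$ is the higher-order sewing remainder produced by the sewing lemma, controlled at an order strictly finer than $\frac{p}{2}$ in terms of $\|R^Z\|_{\frac{p}{2}}$, $\|Z'\|_p$, $\|X\|_p$ and $\|\bbX\|_{\frac{p}{2}}$, and similarly for the tilded object. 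The $\frac{p}{2}$-variation of the leading part $Z'_s\bbX_{s,t} - \tZ'_s\tbbX_{s,t}$ splits as $\|Z' - \tZ'\|_\infty\|\bbX\|_{\frac{p}{2}} + \|\tZ'\|_\infty\|\bbX - \tbbX\|_{\frac{p}{2}}$, and after invoking a Lipschitz stability estimate for the composition map $(Y,Y')\mapsto(f(Y),\D f(Y)Y')$ on controlled paths (which follows from the bounds in \cite[Lemma~3.5]{Friz2018} together with a mean-value argument) these reduce to the claimed right-hand side. The genuinely delicate step, which I expect to be the main obstacle, is bounding the $\frac{p}{2}$-variation of the difference of sewing remainders $Q^Z - Q^{\tZ}$: since the two integrals live over two distinct rough paths there is no single reference path to sew against, so one must carefully separate the cross terms involving $\bbX - \tbbX$, $X - \tX$, $Z' - \tZ'$ and $R^Z - R^{\tZ}$ and verify that each enters at an order high enough to remain $\frac{p}{2}$-summable. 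Combining this two-rough-paths sewing comparison with the first-order estimates above, and absorbing all constants into $C_{M,f,p}$ via the uniform bounds $M$, then yields the assertion.
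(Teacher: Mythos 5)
Your opening decomposition is exactly the paper's: split the increment into $f(Y_s)(X_{s,t}-\tX_{s,t}) + (f(Y_s)-f(\tY_s))\tX_{s,t}$ plus the difference of the integral remainders, bound the first two terms via $\|f\|_{C^1_b}$ and the sup-norm of $f(Y)-f(\tY)$ (itself controlled by the controlled-path data), and reduce everything to the $\frac{p}{2}$-variation of $R^{\int_0^\cdot f(Y_r)\dd\bX_r} - R^{\int_0^\cdot f(\tY_r)\dd\tbX_r}$. Up to that point your argument is correct and matches the paper line for line.

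The divergence is in how that last quantity is handled, and this is where your proposal has a genuine gap. The paper's entire treatment of it is a citation of the local Lipschitz stability estimate for rough integration over two distinct rough paths, namely \cite[Lemma~3.7]{Friz2018} (together with the composition estimate of \cite[Lemma~3.5]{Friz2018} already built into that lemma); that citation \emph{is} the proof. You instead propose to re-derive this from scratch by splitting each remainder as $Z'_s\bbX_{s,t} + Q^Z_{s,t}$ and comparing the two sewing remainders $Q^Z$ and $Q^{\tZ}$ — but you then explicitly defer the key estimate, describing the bound on $\|Q^Z - Q^{\tZ}\|_{\frac{p}{2},[0,T]}$ as ``the main obstacle'' in which one ``must carefully separate the cross terms'' without actually producing the separation or verifying the summability orders. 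Since that two-rough-paths sewing comparison (in the c\`adl\`ag setting, via the generalized sewing lemma with controls of the form $\omega_1^{\alpha}(s,t-)\omega_2^{\beta}(s+,t)$) is the only nontrivial content of the lemma, what you have written is a plan rather than a proof. The gap is closable in two ways: either carry out the difference-germ sewing argument in full (standard but lengthy, and requiring the Lipschitz estimate for $(Y,Y')\mapsto(f(Y),\D f(Y)Y')$ that you invoke but do not prove), or simply observe that the needed bound is precisely \cite[Lemma~3.7]{Friz2018} and cite it, as the paper does.
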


\begin{proof}
  Since
  \begin{align*}
    &\bigg|\int_s^t f(Y_r) \dd \bX_r - \int_s^t f(\tY_r) \dd \tbX_r\bigg|\\
    &\quad\leq |f(Y_s)||X_{s,t} - \tX_{s,t}| + |f(Y_s) - f(\tY_s)||\tX_{s,t}| + \bigg|R^{\int_0^\cdot f(Y_r) \dd \bX_r}_{s,t} - R^{\int_0^\cdot f(\tY_r) \dd \tbX_r}_{s,t}\bigg|,
  \end{align*}
  it follows that
  \begin{align*}
    &\bigg\|\int_0^\cdot f(Y_r) \dd \bX_r - \int_0^\cdot f(\tY_r) \dd \tbX_r\bigg\|_{p,[0,T]}\\
    &\quad\lesssim \|X - \tX\|_{p,[0,T]} + \|f(Y) - f(\tY)\|_{\infty,[0,T]}\|\tX\|_{p,[0,T]} + \|R^{\int_0^\cdot f(Y_r) \dd \bX_r} - R^{\int_0^\cdot f(\tY_r) \dd \tbX_r}\|_{\frac{p}{2},[0,T]}.
  \end{align*}
  Noting that $\|f(Y) - f(\tY)\|_{\infty,[0,T]} \lesssim |Y_0 - \tY_0| + \|f(Y) - f(\tY)\|_{p,[0,T]}$, and employing \cite[Lemma~3.7]{Friz2018}, we deduce the desired estimate.
\end{proof}

\subsection{Existence result for reflected RDEs}

The aim of this section is to establish existence of solutions to reflected differential equations driven by c{\`a}dl{\`a}g $p$-rough paths for $p\in [2,3)$. We consider a c{\`a}dl{\`a}g $p$-rough path $\X \in \mathcal{D}^{p}([0,T];\R^d)$ and a barrier $L \in D^{p}([0,T];\R^n)$. We seek a controlled path $(Y,Y')$ together with a  process $K$ satisfying the \textit{reflected rough differential equation} (reflected RDE) 
\begin{equation}\label{eq:reflected RDE}
  Y_{t}=y +\int_{0}^{t}f(Y_{s})\dd \X_{s} +K_{t},\qquad t\in[0,T],
\end{equation}
where the integral is defined in the sense of~\eqref{eq:rough path integral}, such that, for every $i=1,\dots,n$,
\begin{enumerate}[(a)]
  \item $Y_t^i \geq L^i_t$ for all $t \in [0,T]$,
  \item $K^i \colon [0,T] \to \R$ is a non-decreasing function such that $K^i_0 = 0$, and
  \begin{equation}\label{eq:minimality condition rough case}
    \int_0^t (Y^i_s-L^i_s)\dd K^i_s = 0, \qquad t \in [0,T],
  \end{equation}
  where the integral in~\eqref{eq:minimality condition rough case} is understood in the Lebesgue--Stieltjes sense.
\end{enumerate}
We call a triple $(Y,Y',K)$ a solution to the reflected RDE~\eqref{eq:reflected RDE} if $(Y,Y') \in \mathcal{V}_{X}^{p}([0,T];\R^n)$ and $K \in D^1([0,T];\R^n)$ satisfy \eqref{eq:reflected RDE} together with the conditions (a), (b). We remark that in general we cannot expect the Gubinelli derivative~$Y'$ to be uniquely determined by the RDE, but that the natural choice is known to be $Y' = f(Y)$. We refer to \cite[Sections~6.2 and~8.4]{Friz2014} for a more detailed discussion on the Gubinelli derivative and its uniqueness in the context of continuous rough paths.

\begin{remark}
  The natural generalization of the reflected Young differential equation~\eqref{eq:reflected Young equation} would arguably be the more general equation:
  \begin{equation*}
    Y_t = y + \int_0^t f_1(Y_s)\dd A_s+ \int_0^t f_2(Y_s)\dd \X_s + K_t, \qquad t \in [0,T],
  \end{equation*}
  subject to the conditions (a) and (b) above, where $A \in D^q([0,T];\R^d)$ and $\X \in \mathcal{D}^{p}([0,T];\R^d)$ for $p \in [2,3)$ and $1 \leq q \leq p$ such that $1/p + 1/q > 1$. However, since there is a canonical way to enhanced $A$ and $\X$ to a joint rough path (see \cite[Proposition~14 and~34]{Friz2017}), this equation can be readily reformulated into the form of~\eqref{eq:reflected RDE}.
\end{remark}

\begin{remark}
The rough integral $\int_{0}^{t}f(Y_{s})\dd \X_{s}$ appearing in the reflected RDE \eqref{eq:reflected RDE} is understood as the \textit{forward} rough integral as developed in \cite{Friz2018}, which corresponds to It{\^o} integration in a semimartingale setting. Alternatively, one can define the rough integral $\int_{0}^{t}f(Y_{s})\dd \X_{s}$ based on \textit{geometric} rough integration as introduced in \cite{Chevyrev2019}, which corresponds to Markus integration in a semimartingale setting. While Markus' geometric formulation would certainly lead to a natural formulation of reflected RDEs in a c{\`a}dl{\`a}g setting, the development of a Markus type theory for reflected RDEs requires a significantly different framework and methods compared to the present work; see \cite{Chevyrev2019} for the Markus type theory for (non-reflected) RDEs.
\end{remark}

The main result of this section is stated in the following theorem.

\begin{theorem}\label{thm:existence to reflected RDE}
  For $p\in [2,3)$ and $T>0$ let $\X=(X,\mathbb{X})\in \mathcal{D}^p([0,T];\R^d)$ be a c{\`a}dl{\`a}g $p$-rough path, $L\in D^p([0,T];\R^n)$ and $f \in C^3_b$. Then, for every $y \in \R^n$ with $y \geq L_0$ there exists a solution $(Y,Y', K)$ to the reflected RDE~\eqref{eq:reflected RDE} on $[0,T]$.
\end{theorem}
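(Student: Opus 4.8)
The plan is to prove existence by Schauder's fixed point theorem, mirroring the structure of the Young case (Theorem~\ref{thm:existence uniqueness young case}) but replacing Banach's contraction principle by Schauder's theorem, since the Skorokhod map is not Lipschitz at the level of controlled paths. First I would fix an auxiliary exponent $p' \in (p,3)$ and, for a controlled path $(Y,Y') \in \mathcal{V}^{p'}_X$, set
\[
  \mathcal{M}(Y,Y') := \Big(\mathcal{S}_1\big(y + \textstyle\int_0^\cdot f(Y_r)\dd\X_r,\, L\big),\ f(Y)\Big),
\]
so that the Gubinelli derivative of the output is always $f(Y)$, irrespective of the input $Y'$. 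By Lemma~\ref{lem:bound rough integral} the inner rough integral $Z := y + \int_0^\cdot f(Y_r)\dd\X_r$ is a controlled path with derivative $f(Y)$, and since the reflector $K := \mathcal{S}_2(Z,L)$ is of bounded variation, hence of finite $p'/2$-variation, the pair $(\mathcal{S}_1(Z,L), f(Y)) = (Z+K,\, f(Y))$ is again a controlled path, with remainder $R^Z + K$. A fixed point $(Y^*, f(Y^*))$ of $\mathcal{M}$ is then exactly a solution of the reflected RDE~\eqref{eq:reflected RDE}, the consistency $Y^{*\prime} = f(Y^*)$ being automatic.

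Next I would build a closed, bounded, convex set $\mathcal{C} \subset \mathcal{V}^{p'}_X$, specified by fixing $Y_0 = y$, $Y'_0 = f(y)$ and by imposing a \emph{common} control $\omega$ (built from $(s,t)\mapsto\ver{\X}_{p,[s,t]}^p$ and $(s,t)\mapsto\|L\|_{p,[s,t]}^p$) on the quantities $\|Y'\|_{p,[s,t]}^p$ and $\|R^Y\|_{p/2,[s,t]}^{p/2}$, and show that $\mathcal{M}(\mathcal{C}) \subseteq \mathcal{C}$. Invariance follows by combining the rough integral bound of Lemma~\ref{lem:bound rough integral} with the Skorokhod estimate~\eqref{eq:bound Skorokhod map}; as this bound is quadratic in $\|X\|_p$, the invariance can only be closed on a short interval on which $\ver{\X}_p$ is small, so $\mathcal{C}$ is defined locally. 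A global solution is then produced exactly as in Step~2 of the proof of Theorem~\ref{thm:existence uniqueness young case}: by superadditivity of the controls one partitions $[0,T]$ into finitely many such short intervals, solves on each via Schauder, prescribes the jump at each partition point by the rough analogue of~\eqref{eq:Delta K Young} (using the jump formula $\Delta Z_t = f(Y_{t-})\Delta X_t + (\D f(Y_{t-})Y'_{t-})\Delta\bbX_t$), and concatenates the pieces.

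The genuinely new point for the \emph{continuity} of $\mathcal{M}$ on $\mathcal{C}$ (in the $\mathcal{V}^{p'}_X$ topology) is the reflector term. If $(Y^n,Y'^n)\to(Y,Y')$ in $\mathcal{V}^{p'}_X$, then $Z^n \to Z$ in $\mathcal{V}^{p'}_X$ by Lemma~\ref{lem:contraction rough integrals}, and $K^n \to K$ in $p'$-variation by Theorem~\ref{thm:Skorokhod Lipschitz est}; the difficulty is that the reflectors enter the remainder $R^{Z^n}+K^n$, which is measured in $p'/2$-variation, where the Skorokhod map fails to be Lipschitz. I would resolve this by interpolation: each $K^n$ is monotone, so $\|K^n\|_1 \lesssim \|K^n\|_{p'}$ is bounded uniformly on $\mathcal{C}$ by Lemma~\ref{lemma monotone paths p-var}, and the third inequality in Lemma~\ref{lem:p-variation estimates} then gives
\[
  \|K^n - K\|_{p'/2} \lesssim \|K^n - K\|_1^{2/p'}\, \|K^n - K\|_{p'}^{\,1 - 2/p'} \longrightarrow 0,
\]
which upgrades the $p'$-variation convergence of the reflectors to the $p'/2$-variation convergence needed for the remainders.

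Finally, for \emph{compactness}, the point of working in $\mathcal{V}^{p'}_X$ with $p'>p$ is that $\mathcal{C}$ is bounded in the \emph{finer} $p$-variation controlled norm while we only require relative compactness in the \emph{coarser} space. The crux — and the step I expect to be the main obstacle — is a c{\`a}dl{\`a}g compactness criterion: a family of controlled paths whose components $Y'$ and $R^Y$ satisfy a common $p$- (resp.\ $p/2$-) variation control is relatively compact in the $p'$- (resp.\ $p'/2$-) variation topology. In the continuous case this is the classical interpolation/Arzel{\`a}--Ascoli compactness of rough path theory, but the jumps require care: one must separate the finitely many ``large'' jumps (whose total $p$-variation is bounded by the common control) from a small-variation part, select a subsequence along which the large jumps converge via a Helly-type argument at the jump times, and then apply the interpolation inequality of Lemma~\ref{lem:p-variation estimates} to the remaining part. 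Once this compactness of $\mathcal{C}$ in $\mathcal{V}^{p'}_X$ is in hand, Schauder's theorem yields a fixed point, and since $\mathcal{C}$ is bounded in $p$-variation the resulting solution in fact lies in $\mathcal{V}^p_X$, as required.
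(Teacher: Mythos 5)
Your proposal follows essentially the same route as the paper's proof: Schauder's fixed point theorem applied to the solution map on a coarser $q$-variation (your $p'$) controlled-path space, an invariant set cut out by a common control built from $\ver{\X}_{p}$ and $\|L\|_{p}$, the interpolation $\|K - \tK\|_{p'/2} \lesssim \|K - \tK\|_{1}^{2/p'}\|K - \tK\|_{p'}^{1-2/p'}$ (with the $1$-variation of the monotone reflectors uniformly bounded) to compensate for the Skorokhod map's failure to be Lipschitz at the level of remainders, and a pasting argument with the explicit positive-part formula for $\Delta K$ at the partition points. The only real divergence is the compactness step, where the paper sidesteps your Helly-type selection at jump times by noting that the common control makes the set uniformly bounded and equi-regulated, hence relatively compact in the supremum norm by a standard criterion for regulated functions, after which the same interpolation inequality upgrades this to relative compactness in the $p'$-variation controlled-path topology.
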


The proof of the existence result provided in Theorem~\ref{thm:existence to reflected RDE} is split into two parts. We first rely on Schauder's fixed point theorem to obtain a solution on sufficiently small intervals. In the second part we apply a pasting argument to construct a global solution, where we need to treat the finitely many ``big'' jumps of the driving signal by hand, similarly to the proof of Theorem~\ref{thm:existence uniqueness young case}.

\begin{proof}[Proof of Theorem~\ref{thm:existence to reflected RDE}]
  \textit{Step~1: Local solution.} Since the rough path $\X=(X,\mathbb{X})$ is c\`adl\`ag, the map $t\mapsto \ver{\X}_{p,[0,t]} $ is right-continuous with $\ver{\X}_{p,[0,0]}=0$. Hence, there exists a $t_1\in (0,T]$ such that $\ver{\X}_{p,[0,t]} \leq 1$ for every $t\in (0,t_1]$. Let us fix a $q\in (p,3)$. For $t\in (0,t_1]$ we introduce the solution map~$\mathcal{M}_t$ on the space of controlled paths by
  \begin{align}\label{eq:solution map reflected RDE}
  \begin{split}
    &\mathcal{M}_{t} \colon \mathcal{V}_{X}^{q}([0,t];\R^n)\to \mathcal{V}_{X}^{q}([0,t];\R^n),\\
    &\text{where }\ \mathcal{M}_{t}(Y,Y'):=\big(\mathcal{S}_1 (y+Z,L),f(Y)\big)
    \quad \text{with }\ Z_u:=\int_0^u f(Y_r)\dd \X_r, \quad u\in [0,t]. 
  \end{split}
  \end{align}
  First note that the map $\mathcal{M}_{t}$ is well-defined. Indeed, for $(Y,Y')\in \mathcal{V}_{X}^{q}([0,t];\R^n)$ we have $(Z,f(Y))\in \mathcal{V}_{X}^{q}([0,t];\R^n)$ cf.~\cite[Remark~2.8]{Friz2018}. That is, we have
  \begin{equation*}
    Z_{u,v}=f(Y_u)X_{u,v}+R^Z_{u,v}, \qquad \text{for all} \quad (u,v) \in \Delta_t,
  \end{equation*}
  where $R^Z\in D^{\frac{q}{2}}(\Delta_t;\R^n)$. Since 
  \begin{equation*}
    \mathcal{S}_1 (y+Z,L)_{u,v}= Z_{u,v} + K^Z_{u,v} = f(Y_u) X_{u,v} + R^Z_{u,v} + K^Z_{u,v}
  \end{equation*}
  where
  \begin{equation*}
    K^Z:=\mathcal{S}_2(y+Z,L) \in D^1([0,t];\R^n) \quad \text{and} \quad f(Y) \in D^q([0,t];\R^n),
  \end{equation*}
  we see that
  \begin{equation*}
    R^{\cS_1(y+Z,L)} := R^Z + K^Z \in D^{\frac{q}{2}}(\Delta_{t};\R^n),
  \end{equation*}
  so that $\mathcal{M}_{t}(Y,Y')\in \mathcal{V}_{X}^{q}([0,t];\R^n)$.
  
  We note that any fixed point of the map $\cM_t$, along with the corresponding process~$K$ obtained from the Skorokhod map~$\cS$, are together a solution to the reflected RDE~\eqref{eq:reflected RDE} over the time interval $[0,t]$. To show the existence of a fixed point, it is sufficient to verify that the map~$\cM_t$ satisfies the assumptions of Schauder's fixed point theorem (see e.g.~\cite[Theorem~2.A and Corollary~2.13]{Zeidler1986}) for sufficiently small $t\in (0,t_1]$. Recall that $\mathcal{V}_{X}^{q}([0,t];\R^n)$ equipped with the controlled path norm $\|\cdot,\cdot \|_{q,[0,t]}$ is a Banach space, cf.~\cite[Section~3.2]{Friz2018}.

  For $t \in (0,t_1]$ we define the ball 
  \begin{align*}
    \mathcal{B}_{t}:=\left \{(Y,Y')\in \mathcal{V}_{X}^{p}([0,t];\R^n)\,:
    \begin{array}{l}
      \, Y_0=y, \, Y'_0 =f(y), \,
      \|Y'\|_{p,[0,t]} \leq 1,\,\|R^Y \|_{\frac{p}{2},[0,t]} \leq 1,\\
      \|Y'\|_{p,[u,v]}\leq C_1 (\ver{\X}_{p,[u,v]}+\|L\|_{p,[u,v]}),\\
      \|R^Y \|_{\frac{p}{2},[u,v]}
      \leq C_2 (\ver{\X}_{p,[u,v]} + \|L\|_{p,[u,v]}),\\
      \text{for all }  (u,v) \in \Delta_t
    \end{array}
    \right\},
  \end{align*}
  for some suitable constants $C_1, C_2 \geq 1$ depending only on $f, p$ and $n$, which will be specified later. Let us remark that the ball $\mathcal{B}_t$ is a closed set with respect to $\|\cdot,\cdot \|_{q,[0,t]}$. Indeed, convergence in $\|\cdot,\cdot \|_{q,[0,t]}$ implies uniform convergence, and since every sequence in $\mathcal{B}_t$ has uniformly bounded $p$-variation, the uniform convergence ensures that its limit is again an element of $\mathcal{B}_t$ by the lower semi-continuity of $p$-variation norms, see e.g.~\cite[$(P7)$ in Section~2.2]{Chistyakov1998}.

  \textit{Compactness of the ball $\mathcal{B}_t$.} Due to the closedness of $\mathcal{B}_t$ it is sufficient to show that $\mathcal{B}_t$ is relatively compact. Further, due to the interpolation estimate for $q$-variation (see Lemma~\ref{lem:p-variation estimates}) and the lower semi-continuity of $p$-variation norms, it is sufficient to show that $\mathcal{B}_t$ is relatively compact with respect to the supremum norm. This follows since $\mathcal{B}_t$ is uniformly bounded in the supremum norm and equi-regulated by the definition of $\mathcal{B}_t$, see \cite[Proposition~1]{Cichon2018}.

  \textit{Invariance.} We shall show that there exists a $t_2 \in (0,t_1]$ such that, for every $t \in (0,t_2]$ we have $\mathcal{M}_{t} \colon \mathcal{B}_{t} \to \mathcal{B}_{t}$. Let $(Y,Y') \in \mathcal{B}_{t}$ be a controlled path with remainder $R^Y$ and let $(u,v) \in \Delta_t$. Recall that $\mathcal{M}_t(Y,Y')=(\mathcal{S}_1 (y+Z,L),f(Y))$ and thus the conditions $\mathcal{S}_1 (y+Z,L)_0=y$ and $f(Y_0) =f(y)$ are fulfilled. It remains to ensure the other conditions required in the definition of $\mathcal{B}_t$ are also fulfilled for sufficiently small~$t$. Using the fact that $\ver{\bX}_{p,[0,t]} \leq 1$, $\|Y'\|_{p,[0,t]} \leq 1$, $\|R^Y\|_{\frac{p}{2},[0,t]} \leq 1$, and $|Y'_u| \leq |Y'_0| + \|Y'\|_{p,[0,t]} \leq \|f\|_{\infty} + 1$, we have from~\cite[Lemma~3.6]{Friz2018} that
  \begin{equation*}
    \|f(Y)\|_{p,[u,v]} \lesssim \|X\|_{p,[u,v]} + \|R^Y\|_{\frac{p}{2},[u,v]} 
    \qquad \text{and} \qquad 
    \|R^{Z}\|_{\frac{p}{2},[u,v]} \lesssim \ver{\X}_{p,[u,v]}.
  \end{equation*}
  Since $\|R^Y\|_{\frac{p}{2},[u,v]} \leq C_2 (\ver{\X}_{p,[u,v]} + \|L\|_{p,[u,v]})$, we then have that
  \begin{equation}\label{eq:estimate of Gubinelli derivative}
    \|f(Y)\|_{p,[u,v]} \leq C_{f,p}(1 + C_2)(\ver{\X}_{p,[u,v]} + \|L\|_{p,[u,v]})
  \end{equation}
  for a constant $C_{f,p} \geq 1$ depending only on $f$ and $p$. Moreover, since $K^Z$ is non-decreasing, by Lemma~\ref{lemma monotone paths p-var}, we have that $\|K^Z\|_{\frac{p}{2},[u,v]} \lesssim \|K^Z\|_{p,[u,v]}$. Therefore, applying Theorem~\ref{thm:Skorokhod Lipschitz est} and Lemma~\ref{lem:bound rough integral} gives
  \begin{equation*}
    \|K^Z\|_{\frac{p}{2},[u,v]} \lesssim \|K^Z\|_{p,[u,v]}
    \lesssim \|Z\|_{p,[u,v]} + \|L\|_{p,[u,v]}
    \lesssim \ver{\bX}_{p,[u,v]} + \|L\|_{p,[u,v]}.
  \end{equation*}
  Since $R^{\cS_1(y+Z,L)} = R^Z + K^Z$, we get
  \begin{equation}\label{eq:estimate of remainder}
    \|R^{\cS_1(y+Z,L)}\|_{\frac{p}{2},[u,v]} 
    \leq \|R^Z\|_{\frac{p}{2},[u,v]} + \|K^Z\|_{\frac{p}{2},[u,v]} 
    \leq \hat{C}_{f,p} (\ver{\bX}_{p,[u,v]} + \|L\|_{p,[u,v]})
  \end{equation}
  for some constant $\hat{C}_{f,p} \geq 1$ depending only on $f, p$ and $n$.

  We now choose $C_1 = C_{f,p}(1 + \hat{C}_{f,p})$ and $C_2 = \hat{C}_{f,p}$. With these choices, the estimates~\eqref{eq:estimate of Gubinelli derivative} and \eqref{eq:estimate of remainder} become
  \begin{align*}
    \|f(Y)\|_{p,[u,v]} &\leq C_1 (\ver{\X}_{p,[u,v]} + \|L\|_{p,[u,v]}),\\
    \|R^{\cS_1(y+Z,L)}\|_{\frac{p}{2},[u,v]} &\leq C_2 (\ver{\bX}_{p,[u,v]} + \|L\|_{p,[u,v]}),
  \end{align*}
  which hold for all $(u,v) \in \Delta_{t_1}$.

  We then choose $t_2 \in (0,t_1]$ sufficiently small such that
  \begin{equation}\label{eq:condition local existence RDE}
    \ver{\bX}_{p,[0,t_2]} + \|L\|_{p,[0,t_2]} \leq \min \{C_1^{-1},C_2^{-1}\},
  \end{equation}
  so that in particular we have $\|f(Y)\|_{p,[0,t_2]} \leq 1$ and $\|R^{\cS_1(y+Z,L)}\|_{\frac{p}{2},[0,t_2]} \leq 1$. Thus, for every $t \in (0,t_2]$ we have shown that $\cM_t(Y,Y') \in \cB_t$ for all $(Y,Y') \in \cB_t$, that is, that $\mathcal{M}_{t} \colon \mathcal{B}_{t} \to \mathcal{B}_{t}$.

  \textit{Continuity.} We shall show that the map $\mathcal{M}_{t} \colon \mathcal{B}_{t} \to \mathcal{B}_{t}$ is $(1-2/q)$-H{\"o}lder continuous with respect to the controlled path norm $\|\cdot,\cdot\|_{q,[0,t]}$ for every $t \in (0,t_2]$. For $(Y,Y'), (\tilde{Y},\tilde{Y}') \in \mathcal{B}_{t}$ we write
  \begin{align*}
    Z_u := \int_0^u f(Y_r)\dd \X_r 
    \quad \text{and} \quad 
    \tilde{Z}_u := \int_0^u f(\tilde{Y}_r)\dd \X_r \quad \text{for} \quad u \in [0,t].
  \end{align*}
  We need to estimate
  \begin{align*}
    d_{X,X,q,[0,t]}(\cM_t(Y,Y');\cM_t(\tY,\tY')) 
    &= d_{X,X,q,[0,t]}(\cS_1(y + Z,L),f(Y);\cS_1(y + \tZ,L),f(\tY))\\  
    &= \|f(Y) - f(\tY)\|_{q,[0,t]} + \|R^{\cS_1(y + Z,L)} - R^{\cS_1(y + \tZ,L)}\|_{\frac{q}{2},[0,t]}.
  \end{align*}
  Since $R^{\cS_1(y + Z,L)} = R^Z + K^Z$ and $R^{\cS_1(y + \tZ,L)} = R^{\tZ} + K^{\tZ}$, we have
  \begin{equation*}
    \|R^{\mathcal{S}_1(y + Z,L)} - R^{\mathcal{S}_1(y + \tZ,L)}\|_{\frac{q}{2},[0,t]} \leq \|R^Z - R^{\tZ}\|_{\frac{q}{2},[0,t]} + \|K^Z - K^{\tZ}\|_{\frac{q}{2},[0,t]}.
  \end{equation*}
  The interpolation estimate in Lemma~\ref{lem:p-variation estimates} gives
  \begin{align*}
    \|K^Z - K^{\tZ}\|_{\frac{q}{2},[0,t]} \leq \|K^Z - K^{\tZ}\|_{1,[0,t]}^{\frac{2}{q}} \|K^Z - K^{\tZ}\|_{q,[0,t]}^{1 - \frac{2}{q}},
  \end{align*}
  and Theorem~\ref{thm:Skorokhod Lipschitz est} implies that
  \begin{equation*}
    \|K^Z - K^{\tZ}\|_{q,[0,t]} \lesssim \|Z - \tZ\|_{q,[0,t]}.
  \end{equation*}
  We recall from the inequalities \eqref{eq:estimate of Gubinelli derivative}, \eqref{eq:estimate of remainder} and \eqref{eq:condition local existence RDE}, that $\|f(Y)\|_{q,[0,t]} \leq \|f(Y)\|_{p,[0,t]} \leq 1$, $\|R^Z\|_{\frac{q}{2},[0,t]} \leq \|R^Z\|_{\frac{p}{2},[0,t]} \leq 1$ and, by Lemma~\ref{lemma monotone paths p-var}, that $\|K^Z\|_{1,[0,t]} \lesssim \|K^Z\|_{\frac{p}{2},[0,t]} \leq 1$. Combining the above estimates, we thus deduce that
  \begin{equation*}
    d_{X,X,q,[0,t]}(\cM_t(Y,Y');\cM_t(\tY,\tY'))
    \lesssim \|f(Y) - f(\tY)\|_{q,[0,t]}^{1 - \frac{2}{q}} + \|R^Z - R^{\tZ}\|_{\frac{q}{2},[0,t]}^{1 - \frac{2}{q}} + \|Z - \tZ\|_{q,[0,t]}^{1 - \frac{2}{q}}.
  \end{equation*}
  Using~\cite[Lemma~3.7]{Friz2018} and Lemma~\ref{lem:contraction rough integrals} we can bound the terms on the right-hand side, thus obtaining
  \begin{equation*}
    d_{X,X,q,[0,t]}(\cM_t(Y,Y');\cM_t(\tY,\tY')) \lesssim d_{X,X,q,[0,t]}(Y,Y';\tY,\tY')^{1 - \frac{2}{q}}.
  \end{equation*}

  \textit{Step~2: Global solution.} From Step~1, we know that there exists a solution $(Y,Y',K)$ to the reflected RDE~\eqref{eq:reflected RDE} on every interval $[s,t)$ such that $\ver{\bX}_{p,[s,t)}$ and $\|L\|_{p,[s,t)}$ are sufficiently small that they satisfy the bound in \eqref{eq:condition local existence RDE}. Note that the condition~\eqref{eq:condition local existence RDE} is independent of the initial condition $y$. By the right-continuity of $\X$ and $L$, the maps $t\mapsto \ver{\X}_{p,[s,t]} $ and $t\mapsto \|L\|_{p,[s,t]}$ are right-continuous with $\ver{\X}_{p,[s,s]} = \|L\|_{p,[s,s]} = 0$, for every $s\in [0,T]$. Hence, for every $\delta > 0$ there exists a partition $\cP = \{0 = t_0 < t_1 < \dots < t_N = T\}$ of the interval $[0,T]$ such that
  \begin{equation*}
    \ver{\bX}_{p,[t_{i},t_{i+1})} +\|L\|_{p,[t_i,t_{i+1})} \leq \delta
  \end{equation*}
  for all $i = 0,\dots,N - 1$. By choosing $\delta = \min \{C_1^{-1},C_2^{-1}\}$, we ensure that the condition~\eqref{eq:condition local existence RDE} holds for every $[s,t] \in \cP$. Hence, we can iteratively obtain a solution $(Y,f(Y),K)$ to the reflected RDE~\eqref{eq:reflected RDE} on each interval $[t_i,t_{i+1})$ with initial condition
  \begin{equation*}
    Y_{t_i} = Y_{t_i-} + f(Y_{t_i-})\Delta X_{t_i} + \D f(Y_{t_i-}) f(Y_{t_i-})\Delta \mathbb{X}_{t_i} + \Delta K_{t_i}, 
  \end{equation*}
  with
  \begin{equation}\label{eq:Delta K rough}
    \Delta K_{t_i} = [L_{t_i} - Y_{t_i-} - f(Y_{t_i-})\Delta X_{t_i} - \D f(Y_{t_i-}) f(Y_{t_i-})\Delta \mathbb{X}_{t_i}]^+,
  \end{equation}
  where $[\,\cdot\,]^+$ denotes the positive part. The minimality of the reflector term~$K$ and the perservation of the local jump structure under rough integration (see \cite[Lemma~2.9]{Friz2018}) ensure that~\eqref{eq:Delta K rough} is the only valid choice for the jump $\Delta K_{t_i}$. 

  Pasting the solutions on different intervals together, we obtain a solution $(Y,Y',K) = (Y,f(Y),K)$ to the reflected RDE~\eqref{eq:reflected RDE} on $[0,T]$.
\end{proof}

For Young and rough differential equations without reflection one can rely on Banach's fixed point theorem in order to show the existence of a unique solution. This strategy was still possible to implement in the case of reflected Young differential equations as we saw in Section~\ref{sec:young setting}. However, the situation for reflected rough differential equations is more intricate, and one is unable to rely on Banach's fixed point theorem.

\begin{remark}
  Recall that the solution map $\tilde{\mathcal{M}}_t$ associated to a (non-reflected) RDE is known to be locally Lipschitz continuous for sufficiently small $t$, that is
  \begin{align*}
    \tilde{\mathcal{M}}_t \colon \mathcal{V}_{X}^{p}([0,t];\R^n) \to \mathcal{V}_{X}^{p}([0,t];\R^n),
    \quad \text{via} \quad 
    \tilde{\mathcal{M}}_t(Y,Y') := \bigg(y + \int_0^{\cdot} f(Y_r)\dd \X_r,f(Y)\bigg),
  \end{align*}
  is locally Lipschitz continuous, see the proof of \cite[Theorem~3.8]{Friz2018}. Since the Skorokhod map~$\mathcal{S}$ is also Lipschitz continuous, one might expect the solution map~$\mathcal{M}_t$ associated to reflected RDEs, as defined in \eqref{eq:solution map reflected RDE}, to be locally Lipschitz continuous as well. However, this seems not to be the case, essentially because the controlled path space $\mathcal{V}_{X}^{p}([0,t];\R^n)$ is equipped with a stronger norm than $p$-variation. Indeed, one needs to consider
  \begin{equation*}
    \mathcal{V}_{X}^{p}([0,T];\R^n) \subset D^p([0,T];\R^n) \otimes D^{\frac{p}{2}}(\Delta_T;\R^{d}).
  \end{equation*}
  This makes a significant difference when extending the Skorokhod map from the $p$-variation space to the space of controlled paths. While the map
  \begin{equation*}
    \tilde{S} \colon \mathcal{V}_{X}^{p}([0,T];\R^n) \to \mathcal{V}_{X}^{p}([0,T];\R^n) \quad \text{via} \quad (Y,Y') \mapsto (Y + K,Y')
  \end{equation*}
  is Lipschitz continuous with respect to the distance $\|\cdot\|_{p,[0,T]} + \|\cdot\|_{p,[0,T]}$ (taking $ (Y + K,Y')$ as input), the extension $\tilde{S}$ is only locally H{\"o}lder continuous with respect to distance $d_{X,X,p,[0,T]}$, as shown by the interpolation argument used in the proof of Theorem~\ref{thm:existence to reflected RDE}. To improve the H{\"o}lder continuity of $\tilde{S}$ to (local) Lipschitz continuity with respect to the distance $d_{X,X,p,[0,T]}$ is, unfortunately, impossible; see \cite[Section~3.1]{Deya2019} for a discussion on this in the case of continuous driving signals.
\end{remark}

\section{Reflected RDEs -- Uniqueness in one-dimension}\label{sec:rough uniqueness}

For multidimensional reflected differential equations driven by $p$-rough paths with $p > 2$, it is known that uniqueness of solutions does not hold in general. Indeed, Gassiat~\cite{Gassiat2021} provides a linear rough differential equation in $n = 2$ dimensions reflected at $0$ which possesses infinitely many solutions. However, for one-dimensional reflected RDEs (i.e.~the solution $Y$ of the RDE is real-valued) uniqueness does hold for reflected differential equations driven by continuous rough paths, as proven by Deya et al.~\cite{Deya2019}, see also \cite{Richard2020}. The next theorem provides a uniqueness result for reflected one-dimensional RDEs driven by c{\`a}dl{\`a}g $p$-rough paths, i.e.~for the case when $n = 1$.

\begin{theorem}\label{thm:uniqueness RDEs}
  For $p \in [2,3)$ let $\bX = (X,\bbX) \in \mathcal{D}^p([0,T];\R^d)$ be a c{\`a}dl{\`a}g $p$-rough path, $L \in D^p([0,T];\R)$ and $f \in C^3_b$ with $n = 1$. Then, for every $y \in \R$ with $y \geq L_0$, there exists at most one solution $(Y,Y',K)$ with $Y' = f(Y)$ to the one-dimensional reflected RDE~\eqref{eq:reflected RDE}.
\end{theorem}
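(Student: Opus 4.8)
The plan is to argue by contradiction. Suppose $(Y,f(Y),K)$ and $(\tY,f(\tY),\tK)$ are two solutions and set $D := Y - \tY$; the idea is to localise near a time at which they first separate and to exploit one-dimensionality to keep the reflector terms — the source of the non-Lipschitz behaviour of the Skorokhod map — controlled by the very quantity we wish to bound. Let $\tau := \inf\{t : Y_t \neq \tY_t\}$ and suppose $\tau < T$. On $[0,\tau)$ the solutions agree, so $Y_{\tau-} = \tY_{\tau-}$; since the minimality of the reflector forces the jump of any solution to obey the explicit formula \eqref{eq:Delta K rough}, a deterministic function of $Y_{\tau-}$, $L_\tau$, $\Delta X_\tau$ and $\Delta\bbX_\tau$, it follows that $\Delta K_\tau = \Delta\tK_\tau$, hence $Y_\tau = \tY_\tau$. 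Thus separation is never created at a jump out of agreement, and after shifting the time origin I may assume $\tau = 0$, $Y_0 = \tY_0$, with $Y \neq \tY$ arbitrarily close to $0$. Using the jump relation once more to rule out that an excursion begins with a jump off the diagonal, I extract an interval $(a,b)$ on which, say, $Y > \tY$, with clean left endpoint $Y_a = \tY_a$. (In the continuous case this step is immediate, which is why the continuous argument is shorter.)

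\emph{Switching off the reflection.} On $(a,b)$ we have $Y_t > \tY_t \geq L_t$, so $Y_t > L_t$ and the minimality condition \eqref{eq:minimality condition rough case} forces $K$ to be constant there. Writing $V_t := \int_a^t f(Y_r)\dd\bX_r$ and $\tilde V_t := \int_a^t f(\tY_r)\dd\bX_r$, this gives $Y_t = Y_a + V_t$ and $\tY_t = \tY_a + \tilde V_t + (\tK_t - \tK_a)$, whence, since $\tK$ is non-decreasing and $Y_a = \tY_a$,
\[
0 < D_t \leq V_t - \tilde V_t, \qquad t \in (a,b).
\]
The crucial point is that the reflector of $\tY$ is itself dominated by this difference: representing $\tK_t - \tK_a = \sup_{a \leq r \leq t}(L_r - \tY_a - \tilde V_r)^+$ via the explicit one-dimensional Skorokhod map restarted at $a$, and using $L_r \leq Y_r = \tY_a + V_r$, I obtain $\tK_t - \tK_a \leq S(t)$, where $S(t) := \sup_{a \leq r \leq t}|V_r - \tilde V_r|$.

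\emph{Rough-path contraction.} Since $V - \tilde V = \int_a^\cdot (f(Y_r) - f(\tY_r))\dd\bX_r$ is the rough integral of the controlled path $f(Y) - f(\tY)$, whose value and Gubinelli derivative both vanish at the clean endpoint $a$, the rough integral estimate yields $S(t) \leq C\, N(t)\, \ver{\bX}_{p,[a,t]}$, where $N(t)$ denotes the controlled-path norm $\|\,\cdot\,\|_{p,[a,t]} + \|R^{\,\cdot\,}\|_{\frac{p}{2},[a,t]}$ of $f(Y) - f(\tY)$. Estimating $N(t)$ by the composition bound \cite[Lemma~3.7]{Friz2018} (as in Lemma~\ref{lem:contraction rough integrals} and Theorem~\ref{thm:Skorokhod Lipschitz est}), the only term not already proportional to $\ver{\bX}_{p,[a,t]}\, N(t)$ is the reflector contribution to $R^Y - R^{\tY}$, namely $\|\tK\|_{\frac{p}{2},[a,t]}$; but $\tK$ is monotone, so by Lemma~\ref{lemma monotone paths p-var} this equals its increment $\tK_t - \tK_a \leq S(t) \leq C\, N(t)\, \ver{\bX}_{p,[a,t]}$. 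Collecting terms gives $N(t) \leq C_M\, \ver{\bX}_{p,[a,t]}\,(1 + \ver{\bX}_{p,[a,t]})\, N(t)$; choosing $\epsilon > 0$ with $\ver{\bX}_{p,[a,a+\epsilon]}$ small enough that the prefactor is at most $\tfrac12$ forces $N(a+\epsilon) = 0$, hence $S \equiv 0$ and $Y \equiv \tY$ on $[a,a+\epsilon]$, contradicting $Y > \tY$ on $(a,b)$.

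The step I expect to be the main obstacle is exactly the control of the reflector inside the controlled-path estimate: a priori $\tK$ carries only a bound on its $\tfrac{p}{2}$-variation that is not dominated by the controlled-path distance between the two solutions — this is the failure of Lipschitz continuity of the Skorokhod map on the space of controlled paths. The contradiction argument circumvents this because, on a sign-definite excursion, the one-dimensional constraint $Y \geq L$ pins the free path above the barrier precisely where $\tY$ reflects, giving the \emph{linear} bound $\tK_t - \tK_a \leq S(t)$ that lets the estimate close. A secondary technical point is the reduction to a clean sign-definite excursion, where the càdlàg jump structure has to be handled carefully using the jump relation \eqref{eq:Delta K rough}.
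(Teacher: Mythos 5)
Your overall strategy --- contradiction, localisation to a sign-definite excursion with a clean left endpoint, switching off one reflector and dominating the other via the explicit one-dimensional Skorokhod formula, then closing a contraction in the controlled-path norm --- is sound in its analytic core and genuinely different from the paper's argument, which instead dichotomises on whether $K - \tK$ is monotone near the separation time and, in the monotone case, invokes the Falkowski--S{\l}omi\'nski Lipschitz estimate of Theorem~\ref{thm:Skorokhod Lipschitz est}. Your bound $\tK_t - \tK_a \le \sup_{a \le r \le t}|V_r - \tilde{V}_r|$, obtained from $L \le Y = \tY_a + V$ on the excursion, is an attractive replacement for the paper's Step~4, and the contraction does close: both reflector contributions to $R^Y - R^{\tY}$ then carry a factor of $\ver{\bX}_{p,[a,t]}$, so you do not even need the $\delta$-trick the paper uses.

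There is, however, a genuine gap in the extraction of the excursion, and it sits exactly where the paper has to work hardest. The jump relation \eqref{eq:Delta K rough} only shows that the solutions cannot separate \emph{from} the diagonal: if $Y_{t-} = \tY_{t-}$ then $Y_t = \tY_t$. It does not prevent the two solutions from \emph{jumping over} each other, i.e.\ $Y_{t-} < \tY_{t-}$ but $Y_t > \tY_t$, at a time interior to a component of $\{s : Y_s \neq \tY_s\}$. If that happens, a maximal interval on which $Y \neq \tY$ need not be sign-definite, while a maximal interval on which $Y - \tY$ has constant sign need not have a clean left endpoint $Y_a = \tY_a$; either failure breaks your argument, since the constancy of $K$ and the bound $L \le Y$ used for $\tK$ require $Y > \tY$ throughout $(a,b)$, and the vanishing of the initial value and Gubinelli derivative of $f(Y) - f(\tY)$ at $a$ requires $Y_a = \tY_a$. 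The fix is quantitative rather than structural: at a crossing time $s$ one has $\Delta K_s = 0$ (resp.\ $\Delta \tK_s = 0$) because the upper solution stays strictly above $L$, and writing out the jump of $Y - \tY$ yields $|Y_{s-} - \tY_{s-}| \le C\,|Y_{s-} - \tY_{s-}|\,(|\Delta X_s| + |\Delta \bbX_s|)$, hence $|\Delta X_s| + |\Delta \bbX_s| \ge C^{-1}$. Since $t \mapsto \ver{\bX}_{p,[\tau,t]}$ vanishes as $t \downarrow \tau$, no such jump can occur on a sufficiently small interval to the right of the separation time, and there your excursions are automatically sign-definite with clean left endpoints. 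This is precisely the computation the paper performs in its Step~3 (where it is used to rule out infinitely many alternations of the two solutions); without it, or an equivalent substitute, your proof is incomplete.
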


\begin{proof}
   Let $(Y,Y',K) = (Y,f(Y),K)$ and $(\tY,\tY',\tK) = (\tY,f(\tY),\tK)$ be two solutions of the reflected RDE~\eqref{eq:reflected RDE} given $\bX, L$ and $y$. Note that if $Y$ and $\tY$ are identical then $K$ and $\tK$ are also identical, as the corresponding Skorokhod problem has a unique solution. We assume for a contradiction that
   \begin{equation*}
     Y_a \neq \tY_a
   \end{equation*}
   for some $a \in (0,T]$.

  \textit{Step~1.}
  Let $u$ be the last time before $a$ that the two solutions $Y$ and $\tY$ were equal, i.e.
  \begin{equation}\label{eq:defn time u}
    u := \sup \big\{s \in [0,a) : Y_s = \tY_s\big\}.
  \end{equation}
  We claim that
  \begin{equation}\label{eq:Y eq tY at u}
    Y_u = \tY_u.
  \end{equation}
  To see this we first note that, by the definition of $u$, there exists a sequence of times $(r_k)_{k \geq 1}$ such that $Y_{r_k} = \tY_{r_k}$ for all $k$, and $r_k \nearrow u$ as $k \to \infty$. If $r_k = u$ for any $k$ then we are done, so we may instead assume that $r_k < u$ for all $k$. We observe that
  \begin{equation}\label{eq:Y jump at u}
    \Delta Y_{u} = f(Y_{u-})\Delta X_{u} + \D f(Y_{u-})f(Y_{u-})\Delta \bbX_{u} + \Delta K_{u}
  \end{equation}
  and similarly for $\Delta \tY_{u}$. Since $Y_{r_k} = \tY_{r_k}$ for all $k$, and $r_k \nearrow u$ as $k \to \infty$, we have that $Y_{u-} = \tY_{u-}$. Since $\Delta K_{u}$ is uniquely determined by $Y_{u-}$, $\Delta X_{u}$, $\Delta \bbX_{u}$ and $L_u$ by the relation
  \begin{equation*}
    \Delta K_{u} = [L_u - Y_{u-} - f(Y_{u-})\Delta X_{u} - \D f(Y_{u-})f(Y_{u-})\Delta \bbX_{u}]^+,
  \end{equation*}
  we see that $\Delta K_{u} = \Delta \tK_{u}$, and it then follows from~\eqref{eq:Y jump at u} that~\eqref{eq:Y eq tY at u} does indeed hold.

  Purely for notational simplicity, we shall henceforth assume without loss of generality that $u = 0$, i.e.~that the two solutions separate immediately after time $0$. Indeed, it then follows from \eqref{eq:defn time u} that
  \begin{equation}\label{eq:Ys neq tYs}
    Y_s \neq \tY_s \quad \text{for all} \quad s \in (0,a].
  \end{equation}
 
  \textit{Step~2.}
  We split the remainder of the proof into two cases. Namely, either there exists a time $l \in (0,a]$ such that the function
  \begin{equation*}
    s \, \mapsto \, K_s - \tK_s
  \end{equation*}
  is monotone on the interval $[0,l]$, or there does not.

  Let us first assume that there does not exist such a time $l$. It then follows that there exists a strictly decreasing sequence of times $(t_j)_{j \geq 1}$ with $t_j \in (0,a]$ and $t_j \searrow 0$ as $j \to \infty$, such that
  \begin{align*}
    K_{t_{2k},t_{2k-1}} - \tK_{t_{2k},t_{2k-1}} > 0
    \quad\text{and}\quad  
    K_{t_{2k+1},t_{2k}} - \tK_{t_{2k+1},t_{2k}} < 0,
  \end{align*}
  for every $k \geq 1$. Since $K$ and $\tK$ are both non-decreasing, this implies in particular that $K_{t_{2k},t_{2k-1}} > 0$ and $\tK_{t_{2k+1},t_{2k}} > 0$ for every $k \geq 1$.

  Since, by definition, the reflector $K$ only increases when $Y$ hits the barrier $L$, it follows that there exists another strictly decreasing sequence of times $(r_j)_{j \geq 1}$ with $r_j \in (t_{j+1},t_j]$ for every $j \geq 1$, such that
  \begin{equation*}
    Y_{r_{2k-1}} = L_{r_{2k-1}}
    \quad \text{and} \quad
    \tY_{r_{2k}} = L_{r_{2k}} \quad \text{for all} \quad k \geq 1. 
  \end{equation*}
  As $Y_{r_{2k-1}} = L_{r_{2k-1}} \leq \tY_{r_{2k-1}}$ and $Y_{r_{2k-1}} \neq \tY_{r_{2k-1}}$ (by \eqref{eq:Ys neq tYs}), and similarly at time $r_{2k}$, we see that
  \begin{equation*}
    Y_{r_{2k-1}} < \tY_{r_{2k-1}}
    \quad \text{and} \quad
    Y_{r_{2k}} > \tY_{r_{2k}} \quad \text{for all} \quad k \geq 1.
  \end{equation*}
  If the solutions $Y$ and $\tY$ were continuous, then it would follow immediately from the intermediate value theorem that there must exist a positive time (and actually infinitely many such times) $s \in (0,a]$ such that $Y_s = \tY_s$, contradicting \eqref{eq:Ys neq tYs}. Since our paths are only assumed to be c{\`a}dl{\`a}g, we must argue differently, as, at least in principle, the solutions may ``jump over each other'' infinitely many times.

  \textit{Step~3.}
  For each $k \geq 1$, we let 
  \begin{equation*}
    s_k := \inf \big\{t > r_{2k} : Y_t < \tY_t\big\},
  \end{equation*}
  which defines another strictly decreasing sequence of times $(s_k)_{k \geq 1}$ such that $s_k \searrow 0$ as $k \to \infty$. By right-continuity, we have that $Y_{s_k} \leq \tY_{s_k}$ which, by \eqref{eq:Ys neq tYs}, implies that
  \begin{equation*}
    Y_{s_k} < \tY_{s_k} \quad \text{for all} \quad k \geq 1.
  \end{equation*}
  It is clear that $Y_{s_k-} \geq \tY_{s_k-}$, but if $Y_{s_k-} = \tY_{s_k-}$ then a very similar argument to the one in Step 1 above would imply that $Y_{s_k} = \tY_{s_k}$, which would contradict~\eqref{eq:Ys neq tYs}. Thus, we must have that
  \begin{equation}\label{eq:Yskm ge tYskm}
    Y_{s_k-} > \tY_{s_k-} \quad \text{for all} \quad k \geq 1.
  \end{equation}

  Since $\tY_{s_k} > Y_{s_k} \geq L_{s_k}$, the minimality of the reflector $\tK$ implies that $\Delta \tK_{s_k} = 0$. We thus have that
  \begin{align*}
    0 > Y_{s_k} - \tY_{s_k} &= Y_{s_k-} - \tY_{s_k-} + \big(f(Y_{s_k-}) - f(\tY_{s_k-})\big)\Delta X_{s_k}\\
    &\quad + \big(\D f(Y_{s_k-})f(Y_{s_k-}) - \D f(\tY_{s_k-})f(\tY_{s_k-})\big)\Delta \bbX_{s_k} + \Delta K_{s_k}.
  \end{align*}
  Rearranging and using the fact that $K$ is non-decreasing, we obtain
  \begin{align*}
    0 &< Y_{s_k-} - \tY_{s_k-}\\  
    &< -\big(f(Y_{s_k-}) - f(\tY_{s_k-})\big)\Delta X_{s_k} - \big(\D f(Y_{s_k-})f(Y_{s_k-}) - \D f(\tY_{s_k-})f(\tY_{s_k-})\big)\Delta \bbX_{s_k}.
  \end{align*}
  As $f \in C^3_b$, we deduce the existence of a constant $C > 0$, depending only on $\|f\|_{C^2_b}$, such that
  \begin{equation*}
    |Y_{s_k-} - \tY_{s_k-}| \leq C|Y_{s_k-} - \tY_{s_k-}|\Big(|\Delta X_{s_k}| + |\Delta \bbX_{s_k}|\Big).
  \end{equation*}
  Since $Y_{s_k-} - \tY_{s_k-} \neq 0$ by~\eqref{eq:Yskm ge tYskm}, we deduce that
  \begin{equation*}
    |\Delta X_{s_k}| + |\Delta \bbX_{s_k}| \geq C^{-1} \quad \text{for every} \quad k \geq 1,
  \end{equation*}
  from which we conclude that  
  \begin{equation*}
    \|X\|_{p,[0,a]}^p + \|\bbX\|_{\frac{p}{2},[0,a]}^{\frac{p}{2}} \geq \sum_{k=1}^\infty |\Delta X_{s_k}|^p + |\Delta \bbX_{s_k}|^{\frac{p}{2}} = \infty,
  \end{equation*}
  contradicting the assumption that $\bX = (X,\bbX)$ is a $p$-rough path.

  \textit{Step~4.}
  Recall that in Step~2 we split the proof into two cases. We now proceed to the second case. Namely, we suppose that there exists a time $l \in (0,a]$ such that the function $s \mapsto K_s - \tK_s$ is monotone on the interval $[0,l]$. In particular, it follows from Lemma~\ref{lemma monotone paths p-var} that
  \begin{equation}\label{eq:K m tK eq p vars}
    \|K - \tK\|_{\frac{p}{2},[0,t]} = \|K - \tK\|_{p,[0,t]} \quad \text{for all} \quad t \in (0,l].
  \end{equation}
  Using~\eqref{eq:K m tK eq p vars}, Theorem~\ref{thm:Skorokhod Lipschitz est}, and an elementary estimate for controlled rough paths, we have that
  \begin{align*}
    \|K - \tK\|_{\frac{p}{2},[0,t]} &= \|K - \tK\|_{p,[0,t]}\\
    &\lesssim \bigg\|\int_0^\cdot f(Y_r)\dd \bX_r - \int_0^\cdot f(\tY_r)\dd \bX_r\bigg\|_{p,[0,t]}\\
    &\leq \|f(Y) - f(\tY)\|_{p,[0,t]}\|X\|_{p,[0,t]} + \left\|R^{\int_0^\cdot f(Y_r)\dd \bX_r} - R^{\int_0^\cdot f(\tY_r)\dd \bX_r}\right\|_{\frac{p}{2},[0,t]}.
  \end{align*}
  Let $\delta \geq 1$. As is clear from the structure of controlled rough paths, we have
  \begin{align*}
    &\|Y' - \tY'\|_{p,[0,t]} + \delta\|R^Y - R^{\tY}\|_{\frac{p}{2},[0,t]}\\
    &\quad\leq \|f(Y) - f(\tY)\|_{p,[0,t]} + \delta\left\|R^{\int_0^\cdot f(Y_r)\dd \bX_r} - R^{\int_0^\cdot f(\tY_r)\dd \bX_r}\right\|_{\frac{p}{2},[0,t]} + \delta\|K - \tK\|_{\frac{p}{2},[0,t]}\\
    &\quad\lesssim \|f(Y) - f(\tY)\|_{p,[0,t]} + \delta\left\|R^{\int_0^\cdot f(Y_r)\dd \bX_r} - R^{\int_0^\cdot f(\tY_r)\dd \bX_r}\right\|_{\frac{p}{2},[0,t]} + \delta\|Y' - \tY'\|_{p,[0,t]}\|X\|_{p,[0,t]}.
  \end{align*}
  Applying \cite[Lemma~3.7]{Friz2018}, we obtain
  \begin{align*}
    &\|Y' - \tY'\|_{p,[0,t]} + \delta\|R^Y - R^{\tY}\|_{\frac{p}{2},[0,t]}\\
    &\quad\leq C\Big(\|R^Y - R^{\tY}\|_{\frac{p}{2},[0,t]} + (1 + \delta)\Big(\|Y' - \tY'\|_{p,[0,t]} + \|R^Y - R^{\tY}\|_{\frac{p}{2},[0,t]}\Big)\ver{\bX}_{p,[0,t]}\Big)
  \end{align*}
  for some constant $C > 0$, independent of both $\delta \geq 1$ and $t \in (0,l]$. Let us now choose $\delta = 1 + C$. Then, we get
  \begin{align*}
    \|Y' - \tY'\|_{p,[0,t]} + \|R^Y - R^{\tY}\|_{\frac{p}{2},[0,t]}
    \leq C(2 + C)\Big(\|Y' - \tY'\|_{p,[0,t]} + \|R^Y - R^{\tY}\|_{\frac{p}{2},[0,t]}\Big)\ver{\bX}_{p,[0,t]}.
  \end{align*}
  Since the rough path $\bX = (X,\bbX)$ is c{\`a}dl{\`a}g, the function $t \mapsto \ver{\bX}_{p,[0,t]}$ is itself right-continuous (see \cite[Lemma~7.1]{Friz2018}), so we may choose $t \in (0,l]$ sufficiently small such that $C(2 + C)\ver{\bX}_{p,[0,t]} \leq \frac{1}{2}$. It then follows from the above that $\|Y' - \tY'\|_{p,[0,t]} = 0$ and $\|R^Y - R^{\tY}\|_{\frac{p}{2},[0,t]} = 0$, and hence that $\|Y - \tY\|_{p,[0,t]} = 0$. Thus, $Y = \tY$ on $[0,t]$, contradicting~\eqref{eq:Ys neq tYs}.
\end{proof}

\begin{remark}
  In the proof of Theorem~\ref{thm:uniqueness RDEs}, the assumption that the solution to the reflected RDE~\eqref{eq:reflected RDE} is one-dimensional is only crucial in Steps~2 and~3. In particular, the estimates in Step~4 may be reproduced without any additional difficulty in the multidimensional case. Thus, even in the multidimensional case, if non-uniqueness does occur, at time~$u$ say, then there does not exist an $l > 0$ such that the function $s \mapsto K_s - \tK_s$ is monotone on the interval $[u,u + l]$. It then follows, as we argued in Step~2, that uniqueness can only be lost directly after hitting the barrier, and that all solutions must hit the barrier infinitely many times immediately after uniqueness is lost. Indeed, this is precisely what happens in the counterexample of Gassiat, cf.~the proof of \cite[Theorem~2.1]{Gassiat2021}.
\end{remark}

While one cannot expect to obtain uniqueness for general multidimensional reflected RDEs, equations with specific vector fields can still be treated with the arguments developed in the proof of Theorem~\ref{thm:uniqueness RDEs}. To this end, we introduce following class of vector fields.

\begin{definition}
  We say that a map $f$ belongs to the class $\mathbb{L}^3_b$, if $f \in C^3_b(\R^n;\cL(\R^d;\R^n))$ and is such that each of its $n$ components is given by a function $f_i$, i.e.
  \begin{equation*}
    [f(y)(x)]_i = f_i(y;x) \qquad \text{for each} \quad i = 1,\ldots,n,
  \end{equation*}
  where, for each $i = 1,\ldots,n$, the map $f_i \colon \R^n \times \R^d \to \R$ only depends on its first $i$ arguments, that is,
  \begin{equation*}
    f_i(y_1,\dots,y_n;x) = f_i(y_1,\dots,y_i,\ty_{i+1},\ldots,\ty_n;x) \quad \text{for all} \quad y, \ty \in \R^n, x \in \R^d.
  \end{equation*}
\end{definition}

The structure of the vector fields in $\mathbb{L}^3_b$ allows one to recover uniqueness by successively applying the arguments of the proof of Theorem~\ref{thm:uniqueness RDEs} to each of the $n$ components of the equation in turn. We thus immediately obtain the following corollary.

\begin{corollary}
  For $p \in [2,3)$, let $\X=(X,\mathbb{X})\in \mathcal{D}^p([0,T];\R^d)$ be a c{\`a}dl{\`a}g $p$-rough path, $L\in D^p([0,T];\R^n)$ and $y \in \R^n$ such that $y \geq L_0$. If $f \in \mathbb{L}^3_b$, then there exists at most one solution $(Y,Y',K)$ with $Y' = f(Y)$ to the reflected RDE~\eqref{eq:reflected RDE}.
\end{corollary}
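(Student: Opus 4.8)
The plan is to argue by induction on the components $i=1,\dots,n$, re-running the proof of Theorem~\ref{thm:uniqueness RDEs} for each component while treating the already-matched lower components as a fixed, common input. Let $(Y,f(Y),K)$ and $(\tY,f(\tY),\tK)$ be two solutions of~\eqref{eq:reflected RDE} for the same data $\X,L,y$. Writing~\eqref{eq:reflected RDE} in components, the $i$-th equation reads
\[
  Y^i_t = y^i + \int_0^t f_i(Y^1_s,\dots,Y^i_s)\dd \X_s + K^i_t, \qquad (Y^i)' = f_i(Y^1,\dots,Y^i),
\]
and likewise for $\tY^i$; the defining property of $\mathbb{L}^3_b$ is precisely that $f_i$ depends only on the first $i$ spatial coordinates, so the $i$-th equation couples only to $Y^1,\dots,Y^i$. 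For the base case, $f_1$ depends only on $y^1$, so $(Y^1,K^1)$ solves a genuine one-dimensional reflected RDE driven by $\X$ with barrier $L^1$ and autonomous vector field $f_1\in C^3_b(\R;\cL(\R^d;\R))$; Theorem~\ref{thm:uniqueness RDEs} applies verbatim and gives $Y^1=\tY^1$, whence $K^1=\tK^1$ by uniqueness of the Skorokhod problem.

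For the inductive step, suppose $Y^j=\tY^j$ and $K^j=\tK^j$ for all $j<i$. Substituting these common components into the $i$-th equation, both $Y^i$ and $\tY^i$ solve the \emph{same} scalar reflected RDE, reflected at $L^i$, now with the time-dependent vector field $F(s,z):=f_i(Y^1_s,\dots,Y^{i-1}_s,z)$ and Gubinelli derivative $(Y^i)'=F(\cdot,Y^i)$. I would then repeat the four steps of the proof of Theorem~\ref{thm:uniqueness RDEs} for this scalar equation. The observation that makes each step go through is that the frozen paths $Y^1,\dots,Y^{i-1}$ are identical for the two candidate solutions, so every difference appearing in the argument involves only $Y^i-\tY^i$, and the $C^3_b$-regularity of $f_i$ in its $i$-th argument supplies exactly the Lipschitz bounds the original proof used for $f$.

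Concretely, Step~1 (matching the jump at the separation time $u$) still holds: the $i$-th components of the jump formula~\eqref{eq:Delta K rough} and of~\eqref{eq:Y jump at u} depend only on $Y^1_{u-},\dots,Y^i_{u-}$ (since $\partial_{y_j}f_i=0$ for $j>i$), so $Y^i_{u-}=\tY^i_{u-}$ together with the common lower components forces $\Delta K^i_u=\Delta\tK^i_u$ and hence $Y^i_u=\tY^i_u$. Steps~2--3 (the crossing argument producing barrier-hitting times $s_k\searrow 0$) carry over unchanged, because at each $s_k-$ the coefficient differences satisfy $|F(s_k-,Y^i_{s_k-})-F(s_k-,\tY^i_{s_k-})|\leq C|Y^i_{s_k-}-\tY^i_{s_k-}|$ and similarly for the $\Delta\bbX$-coefficient, again yielding $|\Delta X_{s_k}|+|\Delta\bbX_{s_k}|\geq C^{-1}$ and the same $p$-variation contradiction. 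Step~4 (the Gr\"onwall estimate in the monotone case) follows by applying the controlled-path composition estimate \cite[Lemma~3.7]{Friz2018} to the two $\R^i$-valued controlled paths $(Y^1,\dots,Y^i)$ and $(Y^1,\dots,Y^{i-1},\tY^i)$ and the $C^3_b$ map $f_i\colon\R^i\to\cL(\R^d;\R)$; as these differ only in their last component, the resulting bound involves only $\|(Y^i)'-(\tY^i)'\|_{p,[0,t]}$ and $\|R^{Y^i}-R^{\tY^i}\|_{\frac{p}{2},[0,t]}$, reproducing the scalar estimate. Thus $Y^i=\tY^i$ near the separation time, contradicting the assumed separation, so $Y^i=\tY^i$ and $K^i=\tK^i$. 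After $n$ steps $Y=\tY$, hence $Y'=f(Y)=f(\tY)=\tY'$ and $K=\tK$.

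The main obstacle is that Theorem~\ref{thm:uniqueness RDEs} is stated only for \emph{autonomous} scalar vector fields, whereas every component $i\geq 2$ inherits a genuinely time-dependent coefficient $F(s,\cdot)=f_i(Y^1_s,\dots,Y^{i-1}_s,\cdot)$. The work therefore lies in verifying that none of the four steps is sensitive to this time dependence, which is guaranteed by the fact that the frozen components are common to both solutions so that each estimate collapses to one in the single unknown $Y^i$. The most delicate point is Step~4, where one must confirm that \cite[Lemma~3.7]{Friz2018} still yields the required contraction when the composition with $f_i$ is taken along a controlled path whose lower components are held fixed and equal.
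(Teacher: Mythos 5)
Your proposal is correct and follows exactly the route the paper intends: the paper's justification for this corollary is precisely the one-sentence remark that one "successively applies the arguments of the proof of Theorem~\ref{thm:uniqueness RDEs} to each of the $n$ components in turn," which is your componentwise induction. Your additional care about the time-dependence of the frozen coefficient $F(s,\cdot)=f_i(Y^1_s,\dots,Y^{i-1}_s,\cdot)$ and the verification that Step~4 collapses to a scalar estimate via \cite[Lemma~3.7]{Friz2018} is a useful elaboration of a subtlety the paper leaves implicit, and it is handled correctly.
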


\begin{remark}\label{rem:Deya approach}
  If the driving signal $\bX$ and barrier $L$ are continuous, then one can also prove uniqueness for the one-dimensional reflected RDE~\eqref{eq:reflected RDE} via the rough Gr{\"o}nwall lemma of \cite{Deya2019b}, see \cite{Deya2019} and \cite{Richard2020}. This strategy crucially relies on the uniqueness argument of the sewing lemma (cf.~\cite[Lemma~1]{Deya2019}), which is in turn related to the existence of a suitably regular (i.e.~continuous) control function. However, in the presence of jumps it is not so straightforward to find such a regular control function. This approach thus does not seem feasible for the general c{\`a}dl{\`a}g setting considered here.

  More precisely, assuming $L=0$, in \cite{Deya2019} the authors applied a rough It{\^o} formula (see e.g.~\cite[Section~7.5]{Friz2014}) to $h(Y^1_t,Y^2_t) - h(Y^1_s,Y^2_s)$, where $Y^1$ and $Y^2$ are solutions to \eqref{eq:reflected RDE} and $h$ is a $C^3$-function which approximates the function $(y^1,y^2) \mapsto |y^1 - y^2|$. If $\X$ is continuous, then one can split this term into $\Xi_{s,t} + R^h_{s,t}$, where $\Xi_{s,t}$ is a germ for the increment of the rough integrals $h(Y^1_t,Y^2_t) - h(Y^1_s,Y^2_s)$ and the remainder term $R^h_{s,t}$ satisfies $R^h_{s,t} \leq \omega(s,t)^{3/p}$ for some regular control function $\omega$. Then, since $3/p >1$, by the uniqueness argument in the sewing lemma, $R^h_{s,t}$ possesses the same bound (up to a universal constant) as the one for $\delta R^h_{sut} = \delta \Xi_{sut} := \Xi_{s,t} - \Xi_{s,u} - \Xi_{u,t}$. Since $ \delta \Xi_{sut}$ is computable, one obtains a bound for the remainder $R^h_{s,t}$ and therefore a bound for the increment $h(Y^1_t,Y^2_t) - h(Y^1_s,Y^2_s) \sim |Y^1_t - Y^2_t| - |Y^1_s - Y^2_s|$, which then allows one to use the rough Gr{\"o}nwall lemma.

  In order to apply this approach for the general case (i.e.~when $\X$ only has c{\`a}dl{\`a}g paths), one needs to invoke the generalized sewing lemma, see e.g.~\cite[Theorem 2.5]{Friz2018}; in particular, one needs to find two (potentially non-regular) controls $\omega_1$, $\omega_2$ such that $|R^h_{s,t}| \leq \omega_1^{\alpha}(s,t-)\omega_2^{\beta}(s+,t)$ with $\alpha + \beta > 1$. Let us consider the same decomposition $h(Y^1_t,Y^2_t) - h(Y^1_s,Y^2_s) = \Xi_{s,t} + R^h_{s,t}$ as in the continuous case. A careful inspection of the rough It\^o formula reveals that $R^h_{s,t}$ contains a term $B_s(X_{s,t}, \mathbb{X}_{s,t})$ for some bilinear form $B_s$ depending only on $\D^2h, Y^1_s, Y^2_s$ and $f$. Clearly, it is a priori only bounded by $\|X\|_{p,[s,t]} \|\mathbb{X}\|_{\frac{p}{2},[s,t]}$ instead of the desired bound $\|X\|_{p,[s,t]}\|\mathbb{X}\|_{\frac{p}{2},(s,t]}$, so that we have to move this term from $R^h_{s,t}$ to the germ $\Xi_{s,t}$. This problem is not present in the continuous case as both terms are equal, since $\X$ has no jumps. As a consequence, the ``c{\`a}dl{\`a}g germ'', denoted by $\tilde{\Xi}_{s,t}$, is much more intricate than the ``continuous germ'' $\Xi_{s,t}$ (keeping in mind that one has to include the jump part arising from the It{\^o} formula into $\tilde{\Xi}_{s,t}$), and therefore the computation of $\delta \tilde{\Xi}_{sut}$ would become very involved.

  This observation shows that the proof methodology based on a rough Gr{\"o}nwall lemma is very difficult to extend to the general case. The situation becomes even more complex when dealing with general time-dependent barriers~$L$ as successively done in \cite{Richard2020}. On the other hand, the approach introduced in the proof of Theorem~\ref{thm:uniqueness RDEs} provides an alternative, relatively simple way to obtain uniqueness of solutions to reflected RDEs, even when jumps are allowed in both the driving rough path $\bX$ and in the barrier~$L$.
\end{remark}


\end{document}